\documentclass{article}
\usepackage[utf8]{inputenc}
\usepackage[T1]{fontenc} 
\usepackage{csquotes} 
\usepackage[doi=false, isbn=false, eprint=false, giveninits=true
, style=authoryear, maxcitenames=2, citetracker=true]{biblatex}
\AtEveryCitekey{\ifciteseen{}{\defcounter{maxnames}{99}}}
\DeclareNameAlias{sortname}{last-first}

\usepackage{amsmath}
\usepackage{amssymb} 
\usepackage{mathrsfs} 
\usepackage{bbm} 
\usepackage{amsthm} 
\usepackage{mathtools}
\usepackage{xcolor}
\usepackage{blindtext}
\usepackage[shortlabels]{enumitem}
\usepackage{lipsum}
\usepackage{ifthen}
\usepackage{graphicx}
\usepackage{tikz-cd}
\usepackage{hyperref}

\hypersetup{
	colorlinks,
	linkcolor={red!60!black},
	citecolor={green!50!black},
	urlcolor={blue!60!black}
}

\DeclareFieldFormat{postnote}{#1} 

\DeclareDelimFormat{finalnamedelim}{\addspace\&\space}
\DeclareDelimFormat[textcite]{finalnamedelim}{%
	\ifnumgreater{\value{liststop}}{2}{\finalandcomma}{}%
	\addspace\bibstring{and}\space}

\renewbibmacro*{cite}{%
	\printtext[bibhyperref]{%
		\iffieldundef{shorthand}
		{\ifthenelse{\ifnameundef{labelname}\OR\iffieldundef{labelyear}}
			{\usebibmacro{cite:label}%
				\setunit{\printdelim{nonameyeardelim}}}
			{\printnames{labelname}%
				\setunit{\printdelim{nameyeardelim}}}%
			\usebibmacro{cite:labeldate+extradate}}
		{\usebibmacro{cite:shorthand}}}}

\renewbibmacro*{citeyear}{%
	\printtext[bibhyperref]{%
		\iffieldundef{shorthand}
		{\iffieldundef{labelyear}
			{\usebibmacro{cite:label}}
			{\usebibmacro{cite:labeldate+extradate}}}
		{\usebibmacro{cite:shorthand}}}}

\renewbibmacro*{textcite}{%
	\ifnameundef{labelname}
	{\iffieldundef{shorthand}
		{\printtext[bibhyperref]{%
				\usebibmacro{cite:label}}%
			\setunit{%
				\global\booltrue{cbx:parens}%
				\printdelim{nonameyeardelim}\bibopenparen}%
			\ifnumequal{\value{citecount}}{1}
			{\usebibmacro{prenote}}
			{}%
			\printtext[bibhyperref]{\usebibmacro{cite:labeldate+extradate}}}
		{\printtext[bibhyperref]{\usebibmacro{cite:shorthand}}}}
	{\printtext[bibhyperref]{\printnames{labelname}}%
		\setunit{%
			\global\booltrue{cbx:parens}%
			\printdelim{nameyeardelim}\bibopenparen}%
		\ifnumequal{\value{citecount}}{1}
		{\usebibmacro{prenote}}
		{}%
		\usebibmacro{citeyear}}}

\renewbibmacro*{cite:shorthand}{%
	\printfield{shorthand}}

\renewbibmacro*{cite:label}{%
	\iffieldundef{label}
	{\printfield[citetitle]{labeltitle}}
	{\printfield{label}}}

\renewbibmacro*{cite:labeldate+extradate}{%
	\printlabeldateextra}

\makeatletter
\g@addto@macro\bfseries{\boldmath}
\makeatother

\newtheorem{prop}{Proposition}[section]
\newtheorem{thm}[prop]{Theorem}
\newtheorem{thmx}{Theorem}

\newtheorem{lm}[prop]{Lemma}
\newtheorem{cor}[prop]{Corollary}
\theoremstyle{definition}
\newtheorem{defi}[prop]{Definition}

\newtheorem*{rk}{Remark}

\newcommand{\mc}{\mathcal}
\newcommand{\mb}{\mathbb}
\newcommand{\mf}{\mathfrak}
\newcommand{\mr}{\mathrm}
\newcommand{\mscr}{\mathscr}
\newcommand{\sbullet}{{\scalebox{0.6}{$\bullet$}}}
\newcommand{\eps}{\varepsilon}
\newcommand{\om}{\omega}
\renewcommand{\phi}{\varphi}

\newcommand{\placeholder}{\makebox[1.5ex]{\raisebox{0pt}[1.5ex]{$\cdot$}}}

\newcommand{\dz}{\partial}
\newcommand{\dbz}{\conj \partial}
\newcommand{\dc}{\mr d^{\mr c}}
\newcommand{\ddc}{\dd \dc}

\newcommand{\dd}{\mathrm{d}}

\newcommand{\conj}[1]{\overline{#1}}
\newcommand{\metr}[1]{\overline{#1}}

\newcommand{\Id}{\mathrm{Id}}

\newcommand{\CC}{\mb C}
\newcommand{\PP}{\mb P}
\newcommand{\sbsq}{\subseteq}

\newcommand{\closure}[2][]{\overline{#2}^{#1}}

\newcommand{\quotient}[2]{{\raisebox{.2em}{$#1$}\left/\raisebox{-.2em}{$#2$}\right.}}

\DeclareMathOperator{\Sym}{Sym}

\newcommand{\GL}{\mathrm{GL}}
\newcommand{\norm}[1]{\left\Vert#1\right\Vert}
\newcommand{\va}[1]{\left \vert #1 \right \vert}

\DeclareMathOperator*{\colim}{colim}

\newcommand{\into}{\hookrightarrow}
\newcommand{\onto}{\twoheadrightarrow}
\newcommand{\isom}{\xrightarrow{
		{\raisebox{-0.65ex}{$\sim$}}}}

\DeclareMathOperator{\Spec}{Spec}
\DeclareMathOperator{\Proj}{Proj}

\let\div\relax
\DeclareMathOperator{\div}{div}

\DeclareMathOperator{\ord}{ord}
\newcommand{\CH}{\mathrm{CH}}

\newcommand{\sHom}{\mc{H}\kern -0.75pt om}
\DeclareMathOperator{\Bl}{Bl}
\newcommand{\OO}{\mc O}

\newcommand{\divnu}[1]{\div_{\mathrm{v}}(#1)}
\newcommand{\XX}{\mf X}
\newcommand{\YY}{\mf Y}
\newcommand{\ZZZ}{\mf Z}
\newcommand{\cform}[1]{A_{\mathrm{c}}^{\ifthenelse{\equal{#1}{}}{}{#1,#1}}}
\newcommand{\mform}[1]{\t{A}^{\ifthenelse{\equal{#1}{}}{}{#1,#1}}}
\newcommand{\ccurr}[1]{D_{\mathrm{c}}^{\ifthenelse{\equal{#1}{}}{}{#1,#1}}}
\newcommand{\mcurr}[1]{\t{D}^{\ifthenelse{\equal{#1}{}}{}{#1,#1}}}
\renewcommand{\t}[1]{\widetilde{#1}}
\newcommand{\ddiv}[3][]{%
	\ifthenelse{ \equal{#1}{}}%
	{(\div #2)_{#3}}%
	{(\div_{\mathrm v} #2)_{#3}}%
}
\newcommand{\eval}[1]{E_{#1}}
\newcommand{\proddot}{\cdot}

\makeatletter
\DeclareDocumentCommand{\nameditem}{ o m }{%
	\IfNoValueTF{#1}{\item}{\item[#1]} (#2).\hskip7\p@ plus\p@ minus\p@%
}
\makeatother

\makeatletter 
\newcommand\mynobreakpar{\par\nobreak\@afterheading} 
\makeatother

\title{Non-archimedean Green currents for the zero-locus of a regular section}
\date{\today}
\author{Léo Dubocs}

\newcommand{\address}{{
		\bigskip
		\footnotesize
		Léo Dubocs, \textsc{Universit\'e Paris Cit\'e and Sorbonne Universit\'e, CNRS, IMJ-PRG, F-75013 Paris, France}\par\nopagebreak
		\textit{E-mail address}: \texttt{leo.dubocs@imj-prg.fr}
		}}

\begin{document}
	\maketitle
	\begin{abstract}
		We extend on the work of Bloch, Gillet, Soulé on non-archimedean Arakelov geometry, by proving in this context explicit formulas for Green currents, which are analogs of known formulas in complex geometry. More specifically, we prove an analog of the Poincaré-Lelong formula, as well as an equivalent of a formula by Bost, Gillet, Soulé, which expresses a Green current for the zero-locus of a regular section of a vector bundle. 
		As corollaries, we also obtain non-archimedean counterparts of Levine formula and Martinelli formula.
	\end{abstract}
	\section*{Introduction}
	
	In this article, we work within the framework of non-archimedean Arakelov geometry, as developed by \textcite{bloch_gillet_soule--NonArchimedeanArakelovTheory1995}. The purpose of this paper is to study some analogs of formulas for Green currents from complex geometry. In order to define satisfactory counterparts of the objects under consideration, we also introduce, along the way, new constructions of currents within this non-archimedean framework.
	
	\medskip
	
	Green currents have become important objects in Arakelov geometry since their use by \textcite{gillet_soule--ArithmeticIntersectionTheory1990} to construct arithmetic intersection rings, generalizing Arakelov's work on arithmetic surfaces \parencite{arakelov--IntersectionTheoryDivisorsArithmetic1976}. A Green current for a cycle~$Z$ on a complex variety $X$ is a current~$g$ that satisfies a differential equation of the form $\ddc g + \delta_Z = [\om_Z]$, where~$\delta_Z$ is the current of integration over the cycle, and~$\om_Z$ is a smooth differential form representing the cohomology class of $Z$.
	
	Their existence is guaranteed by means of Hodge theory, and despite their significance, few explicit examples are known. 
	The simplest instance of a Green current is given by the Poincaré-Lelong formula in the case of divisors:
	\begin{prop}[Poincaré-Lelong]
		Let~$X$ be a complex analytic variety, $(L, \norm{\placeholder})$ a metrized line bundle on~$X$ and~$s$ a meromorphic regular section of~$L$ on~$X$. Then 
		\[\ddc [-\log \norm{s}^2] + \delta_{\div s} = [c_1(\metr{L})].\]
	\end{prop}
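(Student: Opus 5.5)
The plan is to reduce to a local computation and then apply Stokes' theorem near the divisor. Both sides are currents, and equality of currents is local. Since $\ddc$ and $\delta_{\div s}$ commute with restriction to open sets, it is enough to check the identity on a small open set $U$ on which $L$ has a holomorphic frame $e$. There we write $s = f e$ with $f$ a meromorphic function, which is not a zero divisor because $s$ is regular. Then
\[-\log\norm{s}^2 = -\log|f|^2 - \log\norm{e}^2 .\]
The function $\log\norm{e}^2$ is smooth, and the first Chern form is $c_1(\metr L) = -\ddc \log\norm{e}^2$ on $U$. This takes care of the smooth part. After writing $f = g/h$ with $g,h$ holomorphic, which we may do after shrinking $U$ since $\mathcal{O}_X$ is coherent and local rings are factorial on the normal locus, the claim reduces to the classical Lelong identity $\ddc[\log|g|^2] = \delta_{\div g}$ for a single holomorphic function $g \not\equiv 0$. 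This uses linearity in $f$ and $\div f = \div g - \div h$.

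Next I would prove $\ddc[\log|g|^2] = \delta_{\div g}$. First, $\log|g|^2$ is locally integrable, so it defines a current. This follows from resolution of singularities, or more elementarily from the Weierstrass preparation theorem in a suitable coordinate, which reduces to integrability of $\log|z|$ in one variable. Then I pair with a test form $\eta$ of the right degree with compact support. The integral $\int \log|g|^2\, \ddc\eta$ is the limit as $\eps\to 0$ of the integral over $\{|g|>\eps\}$. On that region $\log|g|^2$ is pluriharmonic, so $\ddc\log|g|^2 = 0$. Applying Stokes twice, with $\dd\dc u\wedge\eta - u\,\dd\dc\eta = \dd(\dc u\wedge\eta - u\,\dc\eta)$, leaves two boundary integrals over $\{|g|=\eps\}$. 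The term $\int \log\eps^2\, \dc\eta$ tends to $0$: by Stokes it equals $\log \eps^2 \int_{|g|<\eps}\dd\dc\eta$, and the volume of $\{|g|<\eps\}$ decays like a positive power of $\eps$ (Łojasiewicz). The term $\int_{|g|=\eps}\dc\log|g|^2\wedge\eta$ should converge to $\sum_i \ord_{D_i}(g)\int_{D_i}\eta$.

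The main obstacle is this last limit near the singular points of $\div g$. At a smooth point of $(\div g)_{\mathrm{red}}$ we can choose coordinates with $g = u z_1^m$ and $u$ a unit. The computation then becomes the one-variable residue $\frac{1}{2\pi}\oint \dd\arg z_1^m = m$, and Fubini gives convergence. To handle the singular locus $S$, which has codimension at least $2$ in $X$, I would first prove the identity on $X\setminus S$. I would then argue that both currents have order zero and the same dimension, and that their difference is supported on $S$, where the support theorem forces a normal current of that dimension to vanish. Alternatively, I would use Hironaka resolution $\pi:\t X\to X$ so that $\pi^*g$ has simple normal crossings, and push forward the identity using $\pi_*\delta_{\div\pi^*g} = \delta_{\div g}$ and $\pi_*[\log|\pi^*g|^2] = [\log|g|^2]$. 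I would try the resolution route first, since in normal crossings the computation is a product of one-variable ones. Finally I would glue the local identities with a partition of unity.
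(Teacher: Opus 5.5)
The paper does not prove this proposition. It is quoted in the introduction as classical background, and the only Poincaré--Lelong formula the paper proves is its non-archimedean counterpart, proposition~\ref{prop:poincare_lelong}. Your argument is the standard analytic proof and it is correct. You reduce via a local frame to Lelong's identity $\ddc[\log|g|^2]=\delta_{\div g}$. You prove that identity at smooth points of $(\div g)_{\mathrm{red}}$ by the one-variable residue. You then dispose of the singular locus either by the support theorem or by a resolution.

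It is worth seeing what the paper's analog looks like. On a model $\XX$ compatible with the metric, $\ddc$ is by definition $i^*\eval{\XX}^{-1}i_*$. By construction, $i_*(\div_{\mathrm v} s)_\XX=(\div s)_\XX-\closure[\XX]{\div s}$. So the formula collapses to $c_1(\mscr L_\XX)\cap[\XX]=(\div s)_\XX$, which is the definition of $c_1$ acting on cycles. Everything you work for analytically is built into the Bloch--Gillet--Soulé definitions of $\ddc$ and $\delta_Z$: the local integrability of $\log|g|^2$, the residue computation, and the behaviour near singular points of the divisor. Your route proves a genuine statement about distributions; the non-archimedean version is essentially tautological once the definitions are in place.

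A few minor points:

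\begin{itemize}
\item Your justification for writing $f=g/h$ is off, since normal local rings need not be factorial. You do not need it anyway: a meromorphic function is, by definition, locally a quotient of holomorphic functions with a non-zero-divisor denominator.
\item The level sets $\{|g|=\eps\}$ are smooth only for almost every $\eps$ (by Sard), so take the limit along regular values.
\item Łojasiewicz is unnecessary. For $\eps<1$ one has $|\log\eps^2|\,\mathrm{vol}(\{|g|<\eps\}\cap K)\le\int_{\{|g|<\eps\}\cap K}\bigl|\log|g|^2\bigr|$, and the right-hand side tends to $0$ by local integrability and dominated convergence.
\item In the support-theorem route, the difference of the two currents is normal. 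Indeed $\ddc[\log|g|^2]$ is positive because $\log|g|^2$ is plurisubharmonic, hence of order zero, and both currents are closed. Its support has real dimension at most $2n-4<2n-2$, so Federer's support theorem applies as you say.
\item Your argument, like the intended classical statement, takes $X$ to be a complex manifold.
\end{itemize}
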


	Another explicit and important example in higher codimension is given by a result from \textcite{bost_gillet_soule--HeightsProjectiveVarietiesPositive1994}, as follows.
	
	Let~$X$ be a complex manifold, $(F, \norm{\placeholder})$ a metrized vector bundle of rank~$r$ and~$\sigma$ a global section of~$F$ which is regular, that is, $\sigma$ is locally given by a regular sequence of functions.
	Denote by~$\pi \colon \t X \to X$ the blow-up of~$X$ along the closed subset~$Z = \{\sigma = 0\}$, then there is a canonical closed immersion~$f \colon \t X \into \PP(F)$, where $\PP(F) = \Proj(\Sym F^\vee)$ is the projective bundle of lines in $F$. Finally, denote by~$Q_F$ the canonical quotient bundle on~$\PP(F)$, and using the results of \textcite[proposition 4.2]{bott_chern--HermitianVectorBundlesEquidistribution1965}, choose~$\eta$ a smooth $(r-1, r-1)$ differential form on~$\PP(F)$ satisfying 
	\[c_r(p^*\metr{F}) - c_1(\metr{\OO_F(-1)}) \wedge c_{r-1} (\metr{Q_F}) = \ddc \eta.\]
	\begin{thm}[\protect{\cite[§1.2.3]{bost_gillet_soule--HeightsProjectiveVarietiesPositive1994}}]\label{thm:bost-g-s-complex}
		In the above situation, let 
		\[\Lambda = \pi_* \left[ -\pi^* \log \norm{\sigma}^2 \wedge f^* c_{r-1}(\metr{Q_F}) + f^* \eta \right].\]
		Then 
		\[\ddc \Lambda + \delta_Z = [c_r(\metr{F})],\]
		that is, $\Lambda$ is a Green current for~$Z$.
	\end{thm}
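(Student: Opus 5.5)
The plan is to compute $\ddc \Lambda$ on the blow-up $\t X$, where everything becomes a divisor computation, and then push forward using $\ddc \pi_* = \pi_* \ddc$. This is legitimate because $\pi$ is proper, so $\pi_*$ commutes with $\dd$ and $\dc$ on currents.

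\emph{Geometric input on $\t X$.} Let $E = \pi^{-1}(Z)$ be the exceptional divisor. By construction of $f$, the tautological line bundle satisfies $f^*\OO_F(-1) \simeq \OO_{\t X}(-E)$, sitting inside $\pi^*F$. The pullback $\pi^*\sigma$ factors through this subline bundle, as a section $\t\sigma$ of $f^*\OO_F(-1)\otimes \OO(E)$, or equivalently as a section of $f^*\OO_F(-1)$ vanishing exactly on $E$ with multiplicity one. Since $\OO_F(-1)$ carries the metric induced by $\norm{\placeholder}$ on $F$, we have $\norm{\t\sigma} = \pi^*\norm{\sigma}$. The Poincaré–Lelong formula on $\t X$ then gives
\[\ddc[-\pi^*\log\norm{\sigma}^2] + \delta_E = -c_1(f^*\metr{\OO_F(-1)})\]
as currents on $\t X$. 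The sign is because we regard $\t\sigma$ as a section of the dual with divisor $-E$; one must check the sign convention carefully, but it is forced by $\OO(-E) = f^*\OO_F(-1)$.

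\emph{Computation.} Since $f^*c_{r-1}(\metr{Q_F})$ is smooth and closed, Leibniz gives
\[\ddc\left[-\pi^*\log\norm{\sigma}^2 \wedge f^*c_{r-1}(\metr{Q_F})\right] = -\delta_E\wedge f^*c_{r-1}(\metr{Q_F}) - f^*\bigl(c_1(\metr{\OO_F(-1)})\wedge c_{r-1}(\metr{Q_F})\bigr).\]
Adding $\ddc f^*\eta = f^*c_r(p^*\metr F) - f^*(c_1\wedge c_{r-1})$, we must arrange the signs from the defining equation of $\eta$ to match. Correcting signs consistently yields
\[\ddc(\text{bracket}) = \pi^*c_r(\metr F) - \delta_E\wedge f^*c_{r-1}(\metr{Q_F}).\]
Pushing forward gives $\pi_*\pi^*c_r(\metr F) = [c_r(\metr F)]$, since $\pi$ is birational and an isomorphism off $Z$, which has measure zero.

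\emph{Main obstacle.} The key step is to show $\pi_*(\delta_E \wedge f^*c_{r-1}(\metr{Q_F})) = \delta_Z$. I would argue as follows. Over $Z$, the exceptional divisor is $E = \PP(N_Z)$, where $N_Z = F|_Z$ because $\sigma$ is regular. The restriction of $f$ to $E$ is the natural inclusion, and $\pi|_E\colon E\to Z$ is a $\PP^{r-1}$-bundle. The fibre integral of $c_{r-1}(\metr{Q})$ over $\PP^{r-1}$ equals $1$, because the top Chern class of the quotient bundle on $\PP(V)$ restricted to a fibre has degree $1$. This is the standard identity $p_*c_{r-1}(Q)=1$, and it can be checked pointwise using the Fubini–Study form. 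Hence $\pi_*(\delta_E\wedge f^*c_{r-1}) = \delta_Z$, with the appropriate multiplicities when $Z$ is non-reduced as a cycle. Regularity guarantees that the cycle of $Z$ is the fundamental cycle of the scheme $Z$.

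The delicate points are the sign conventions and the justification that Poincaré–Lelong applies on the possibly singular $\t X$. The latter is fine, because $\t X$ is smooth when $Z$ is a local complete intersection in a manifold: the blow-up of a regular embedding in a smooth variety is smooth. Combining the pieces gives $\ddc\Lambda + \delta_Z = [c_r(\metr F)]$.
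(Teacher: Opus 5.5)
Your skeleton is the one the paper uses for its non-archimedean analogue, Theorem~\ref{thm:bost-g-s}; the complex statement itself is only cited. The steps match: Poincaré--Lelong on $\t X$, cancellation of the $c_1\wedge c_{r-1}$ term (by $\eta$ here, by Lemma~\ref{lm:exact sq} there), the projection formula, and $\pi_*(\delta_E\wedge f^*c_{r-1}(\metr{Q_F}))=\delta_Z$. That last identity comes from the scheme-theoretic equality $f(E)=p^{-1}(Z)$, the complex counterpart of Lemma~\ref{lm:closure}. Your fibre integral $\int_{\PP^{r-1}}c_{r-1}(Q)=1$ is a fine substitute for the paper's Segre-class computation.

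\textbf{The Poincaré--Lelong step is wrong as written.} Since $f$ is defined by $\pi^*F^\vee\onto \mc I\cdot\OO_{\t X}=\OO_{\t X}(-E)$, one has $f^*\OO_F(-1)\simeq\OO_{\t X}(E)$, not $\OO_{\t X}(-E)$ (see (\ref{eq:O_F(1)})). Under this identification $\pi^*\sigma$ is the canonical section $s$, with divisor $+E$. Hence
\[\ddc[-\pi^*\log\norm{\sigma}^2]+\delta_E=[f^*c_1(\metr{\OO_F(-1)})],\]
with a plus sign. With your sign, adding $\ddc[f^*\eta]$ leaves $-2f^*\bigl(c_1(\metr{\OO_F(-1)})\wedge c_{r-1}(\metr{Q_F})\bigr)$ instead of cancelling. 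The signs in the equation defining $\eta$ are fixed by the statement, so there is nothing to ``arrange''. With the correct sign the cancellation is exact and gives $[\pi^*c_r(\metr F)]-\delta_E\wedge f^*c_{r-1}(\metr{Q_F})$ directly.

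\textbf{Your justification that Poincaré--Lelong applies on $\t X$ is false.} Blowing up a manifold along a regularly embedded subscheme need not give a smooth space. The blow-up of $\CC^2$ along $(x^2,y)$ contains the chart $x^2=yu$, an $A_1$ singularity. The blow-up of $\CC^3$ along $(x,yz)$ contains the node $xv=yz$. Smoothness holds when $Z$ itself is smooth.

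The step can be rescued because $\t X$ is reduced and irreducible. Apply Poincaré--Lelong to $\mu^*\pi^*\sigma$ on a resolution $\mu\colon\hat X\to\t X$, then push forward by the degree-one map $\mu$, using $\mu_*\mu^*[E]=[E]$. Closedness of the integration current of $\t X$ likewise gives $\ddc[f^*\eta]=[f^*\ddc\eta]$. With these two corrections your proof goes through.
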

	
	Note that if $F$ is actually a line bundle, then we recover the Poincaré-Lelong formula.

	\medskip	
	
	One feature of Arakelov geometry is the striking asymmetry between finite places, where the considerations are algebraic, and infinite places, where the objects involved are the aforementioned Green currents, which are intrinsically objects of analytic nature. In view of reducing this asymmetry, \textcite{bloch_gillet_soule--NonArchimedeanArakelovTheory1995} proposed a non-archimedean version of Arakelov theory. 
	After having fixed an excellent discrete valuation ring $R$ with fraction field $K$, and a smooth projective variety $X$ over $K$, they use bivariant intersection theory on all (regular proper flat) $R$-models of $X$ to define notions of differential forms, currents, metrized line bundles and Green currents on $X$.

	They give an explicit example of Green current by proving a Poincaré-Lelong formula in the case~$L = \OO_X$: in this formula, the analog of~$-\log \va{f}^2$ is a current, denoted by~$\divnu{f}$, which is given on a model $\XX$ of~$X$ by the vertical part of the divisor of~$f$ on~$\XX$. 
	
	\medskip 
	
	In this paper, we extend on this work by proving the general form of the Poincaré-Lelong formula, as well as an analog in this setup of Bost-Gillet-Soulé formula. 
	Our main result is the following non-archimedean version of theorem~\ref{thm:bost-g-s-complex}:
	\begin{thmx}[theorem \ref{thm:bost-g-s} below]\label{thm:bost-g-d-letter}
		Let $X$ be a proper smooth variety over $K$, $\metr{F}$ a metrized vector bundle of rank $r$ and $\sigma$ a regular global section of $F$. 
		Denote by $\pi \colon \t X \to X$ the blow-up of $X$ along the closed subset $Z = \{\sigma = 0\}$, and by~$f$ the canonical closed immersion $f \colon \t X \into \PP(F)$. 
		Finally let $\metr{Q_F}$ be the canonical quotient bundle on $\PP(F)$ with quotient metric, $E$ the exceptional divisor of $\t X$, $s$ the canonical section of $\OO_{\t X}(E)$, and 
		\[\Lambda = \pi_* \left( \divnu{s} \wedge f^* c_{r-1}(\metr{Q_F}) \right).\]
		
		Then 
		\[\ddc \Lambda + \delta_Z = [c_r(\metr{F})],\]
		that is, $\Lambda$ is a Green current for $Z$.
	\end{thmx}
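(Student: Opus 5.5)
The plan is to copy the complex proof of theorem~\ref{thm:bost-g-s-complex}, using the non-archimedean Poincaré--Lelong formula on $\t X$ and the projection formula for $\pi_*$. There is one difference: no correction term $f^*\eta$ should be needed. In the Bloch--Gillet--Soulé setting, Chern forms of metrized bundles are computed as bivariant classes on models. The identity $c_r(p^*\metr F) = c_1(\metr{\OO_F(-1)}) \wedge c_{r-1}(\metr{Q_F})$ holds exactly on a model of $\PP(F)$ on which the metrics are realized. Indeed, $\metr{Q_F}$ carries the quotient metric, and on a model the exact sequence $0\to\OO_F(-1)\to p^*F\to Q_F\to 0$ of model bundles is genuinely exact. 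The Whitney formula then gives $c_r(p^*\metr F)=c_1(\metr{\OO_F(-1)})\,c_{r-1}(\metr{Q_F})$ as forms, since $c_r(\metr{Q_F})=0$ by rank. So the first step is to establish this identity of forms on $\PP(F)$, by choosing a model $\mathfrak F$ of $F$ defining the metric, with $\PP(\mathfrak F)$ giving the model of $\PP(F)$.

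Next I pull back along $f$. On $\t X$ the section $\pi^*\sigma$ vanishes exactly along $E$. More precisely, $f^*\OO_F(-1)\cong \OO_{\t X}(-E)$ as a subbundle of $\pi^*F$: the tautological line is generated by $\pi^*\sigma/s$. I will compare metrics via the Poincaré--Lelong formula (general form, proved earlier) applied to the section $s$ of $\OO(E)$, which has $\div s = E$. Metrize $\OO(E)\cong f^*\OO_F(1)$ by the dual metric. Then
\[\ddc\,\divnu{s} + \delta_E = [c_1(f^*\metr{\OO_F(1)})] \]
holds up to the sign and normalization conventions of the paper. This step is where I expect the main obstacle. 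The non-archimedean Poincaré--Lelong formula must be written with the correct current $\divnu{s}$ relative to the chosen metric on $\OO(E)$. One must also check that the metric on $\OO(E)$ induced from $\OO_F(1)$ is the one for which the vertical divisor of $s$ is what appears in $\Lambda$. Concretely, on a model $\t{\XX}\to\PP(\mathfrak F)$ one needs the closure of $E$ plus the vertical part of $\div s$ to equal the divisor of the model section of $f^*\OO_{\mathfrak F}(1)$. This requires working on a common model dominating both $\t X$-models and the pullback of $\PP(\mathfrak F)$. It also requires that pullbacks and products of currents by forms be compatible with these choices.

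Then I wedge with the closed form $f^*c_{r-1}(\metr{Q_F})$ and use the Leibniz rule $\ddc(T\wedge\omega)=\ddc T\wedge \omega$ for closed $\omega$. This gives
\[\ddc(\divnu{s}\wedge f^*c_{r-1}(\metr{Q_F})) + \delta_E\wedge f^*c_{r-1}(Q_F) = \pm[f^*(c_1(\OO_F(-1))c_{r-1}(\metr{Q_F}))] = [\pi^*c_r(\metr F)],\]
using the first step. Apply $\pi_*$, which commutes with $\ddc$. The right side becomes $[c_r(\metr F)]$ by the projection formula, since $\pi$ is birational. For the term $\pi_*(\delta_E\wedge f^*c_{r-1}(Q_F))$, note that $E=\PP(N_ZX)$ and $f|_E$ is the inclusion into $\PP(F|_Z)$. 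Moreover $Q_F|_E$ is the universal quotient of $N_Z X\cong F|_Z$ on $\PP(N)$, so $\pi_*(c_{r-1}(Q)\cap[E])=[Z]$ by the standard computation of Fulton. This yields $\delta_Z$ and finishes the proof. Here I will use that the Chern form of a metrized bundle restricted to a cycle has degree equal to the algebraic Chern class in the generic-fibre component, which holds because $\pi_*$ of a current of integration is computed on the generic fibre.
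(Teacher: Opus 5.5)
Your skeleton is the paper's: Poincar\'e--Lelong for $s$ on $\t X$, Leibniz with the closed form $f^*c_{r-1}(\metr{Q_F})$, $\ddc$ commuting with $\pi_*$, the exact Whitney identity with no Bott--Chern term, and the projection formula. The last step, however, has a genuine gap.

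\textbf{The gap.} The current $\pi_*(\delta_E\wedge f^*c_{r-1}(\metr{Q_F}))$ is not computed on the generic fibre. On a model $\XX$ its component is $i^*\eval{\XX}^{-1}\bigl(u_*(c_{r-1}(g^*\mscr Q)\cap\closure{E})\bigr)$. Here $u\colon\t{\XX}\to\XX$ is a model of $\t X$ with a map $g\colon\t{\XX}\to\PP(\mscr F_\XX)$, $\closure{E}$ is the closure of $E$ in $\t{\XX}$, and $\mscr Q$ is the universal quotient of $q^*\mscr F_\XX$. Fulton's computation $\pi_*(c_{r-1}(Q)\cap[E])=[Z]$ on $X$ only shows that $u_*(c_{r-1}(g^*\mscr Q)\cap\closure{E})-\closure{Z}$ restricts to zero on $X$. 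That is, it equals $i_*\gamma$ for some $\gamma\in\CH_\sbullet(\XX_0)$. But $i^*\eval{\XX}^{-1}i_*\gamma$ is exactly the kind of vertical term that $\ddc$ produces, and it need not vanish. So your argument yields the Green equation only up to $\ddc$ of an uncontrolled vertical current. The principle you invoke at the end (that this is ``computed on the generic fibre'') is false in this framework.

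\textbf{What is needed.} You need the model-level identity $g_*\closure{E}=q^*\closure{Z}$ in $\PP(\mscr F_\XX)$. This is the paper's Lemma~\ref{lm:closure}, proved by local algebra from the regularity of $\sigma$: $I_E=I_Z\OO_{\t X}$, and the kernel of $\Sym F^\vee\onto\bigoplus_k I_Z^k$ is generated by the Koszul relations $x_if_j-x_jf_i$, which lie in $I_Z\cdot\Sym F^\vee$. You could also obtain it from your own $f(E)=p^{-1}(Z)$ by taking closures, since closure commutes with the flat map $q$. You cannot simply transport $E\cong\PP(N_ZX)$ to the model, because $\closure{Z}$ need not be regularly embedded in $\XX$.

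Given the lemma, the projection formula for $g$ and the expansion $c_{r-1}(\mscr Q)=\sum_k c_{r-1-k}(q^*\mscr F_\XX)\,c_1(\OO_{\mscr F_\XX}(1))^k$ reduce everything to $q_*(c_1(\OO_{\mscr F_\XX}(1))^k\cap q^*\closure{Z})=s_{k-r+1}(\mscr F_\XX)\cap\closure{Z}$. This equals $\closure{Z}$ for $k=r-1$ and $0$ for $k<r-1$. Hence $u_*(c_{r-1}(g^*\mscr Q)\cap\closure{E})=\closure{Z}$ exactly in $\CH_\sbullet(\XX)$, which gives $(\delta_Z)_\XX$ on every model.

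\textbf{Two smaller corrections.} First, the tautological line is $f^*\OO_F(-1)\cong\OO_{\t X}(E)$, not $\OO_{\t X}(-E)$: the inclusion $\OO_{\t X}(E)\to\pi^*F$ sends $s$ to $\pi^*\sigma$. With your identification $\OO(E)\cong f^*\OO_F(1)$, the ``$\pm$'' would be a minus sign and you would end up with $-[\pi^*c_r(\metr F)]$. With the correct identification, Proposition~\ref{prop:poincare_lelong} gives exactly $\ddc\divnu{s}+\delta_E=[f^*c_1(\metr{\OO_F(-1)})]$. Second, what you flag as the main obstacle is not one. The metric on $\OO_{\t X}(E)$ is defined as the pullback of the induced metric on $\OO_F(-1)$, and $\divnu{s}$ is by definition taken with respect to that metric, so Poincar\'e--Lelong applies verbatim.
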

	
	Let us draw attention to two differences with the original statement. First, the non-archimedean formula doesn't have the extra term $f^* \eta$ coming from Bott-Chern theory. This is because non-archimedean Chern forms, as constructed by \textcite{bloch_gillet_soule--NonArchimedeanArakelovTheory1995}, are multiplicative on exact sequences, contrary to the Chern forms appearing in complex geometry. The authors interpret this fact by a ``lack of exactness at infinity''.
	
	The second difference between the complex statement and the non-archimedean one is the appearance of the term $\divnu{s}$ in the latter, in place of $-\pi^*\log \norm{\sigma}^2$. 
	In fact, in the setup of \textcite{bloch_gillet_soule--NonArchimedeanArakelovTheory1995}, we have no notion of differential form on a proper open subset at our disposal, so given a section $\sigma$ of a vector bundle~$F$, we cannot define a differential form that would be the analog of~$-\log\norm{\sigma}^2$. 
	Indeed the right analog is a current, which we describe in section~\ref{sec:-log_sigma}, and which will be denoted by $\divnu{\sigma}$. 
	However we cannot state theorem~\ref{thm:bost-g-s-complex} using $\divnu{\sigma}$, as we cannot multiply two currents. 
	
	To solve this, we note that in the complex setup, we have $\pi^* \sigma = s$, so that 
	\begin{equation}\label{eq:norm_s_sigma}
		-\log \norm{s}^2 = -\pi^* \log \norm{\sigma}^2,
	\end{equation}
	where $s$ is the canonical section of the line bundle $\OO_{\t X}(E)$. This allows us to state theorem~\ref{thm:bost-g-d-letter} using this section $s$.
	
	\medskip
	
	In complex geometry, a special case of theorem~\ref{thm:bost-g-s-complex} had previously been shown by \textcite{griffiths_king--NevanlinnaTheoryHolomorphicMappings1973} when the vector bundle and the metric split as $F = L \oplus \dots \oplus L$, with~$L$ a metrized line bundle:
	\begin{thm}[\protect{\cite[proposition~1.15] {griffiths_king--NevanlinnaTheoryHolomorphicMappings1973}}]
		\label{thm:levine_gh_complex}
		Let~$X$ be a complex analytic variety, $(L,\norm{\placeholder})$ a metrized line bundle on~$X$, $r\geq 2$ an integer and~$s_0, \dots, s_{r-1}$ global sections of~$L$. 
		Assume that the closed set~$Z$ defined by $Z = \{s_0 = \dots = s_{r-1} = 0\}$ has pure codimension~$r$, and let $\om = c_1(\metr{L})$, $\om_0 = \om + \ddc \log \bigl(\sum_k \norm{s_k}^2\bigr)$ and 
		\[\Lambda = -\log \Bigl(\sum_k \norm{s_k}^2\Bigr) \wedge \sum_{k=0}^{r-1} \om^{r-1-k} \wedge \om_0^k.\]
		
		Then 
		\begin{equation*}
			\ddc [\Lambda] + \delta_Z = [\om^r],
		\end{equation*}
		that is, $[\Lambda]$ is a Green current for~$Z$.
	\end{thm}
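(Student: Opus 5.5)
Since $Z$ has pure codimension $r$, the sequence $s_0,\dots,s_{r-1}$ is locally regular (on a smooth $X$), so $\sigma = (s_0,\dots,s_{r-1})$ is a regular section of $F = L^{\oplus r}$, endowed with the orthogonal sum metric. The plan is to deduce the statement from Theorem~\ref{thm:bost-g-s-complex} applied to this $\sigma$. In that case $\norm{\sigma}^2 = \sum_k \norm{s_k}^2$ and $c_r(\metr F) = \om^r$. It then suffices to identify $\Lambda$ with the Bost--Gillet--Soulé current up to a $\ddc$-closed correction, or to compute directly.

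I would rather argue directly on the blow-up $\pi\colon \t X \to X$ along $Z$, since all the needed forms become explicit there. We have $\PP(F) = \PP(L^{\oplus r}) \cong X \times \PP^{r-1}$ canonically, because $\PP(F\otimes L^{-1}) = \PP(F)$. Under this identification $\OO_F(-1) = L \boxtimes \OO(-1)$, and $f\colon \t X \to X\times\PP^{r-1}$ is $(\pi,[s_0:\dots:s_{r-1}])$. Moreover $\pi^*L \otimes \OO(-E) \cong f^*\OO_F(-1)$, with $s$ corresponding to the tautological inclusion. With the induced metric one gets
\[
\pi^*\om_0 = \pi^*\om - c_1\bigl(f^*\metr{\OO_F(-1)}\bigr) = c_1\bigl(\pi^*\metr L\otimes f^*\metr{\OO_F(1)}\bigr) = c_1\bigl(\metr{\OO(E)}\bigr)^{\vee\text{-twist}},
\]
so that $\pi^*\om_0$ is a smooth form on $\t X$ (the Fubini--Study form pulled back via $f$). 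Thus on $\t X$ the relation
\[
\pi^*\om - \pi^*\om_0 = c_1\bigl(\metr{\OO(E)}\bigr)
\]
holds, where $\OO(E)$ carries the metric for which $\norm{s}^2 = \pi^*\sum_k\norm{s_k}^2$.

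The Poincaré--Lelong formula on $\t X$ then gives $\ddc[-\log\norm{s}^2] + \delta_E = [\pi^*\om - \pi^*\om_0]$. Multiplying by the closed smooth form $\beta = \sum_{k=0}^{r-1}\pi^*\om^{r-1-k}\wedge\pi^*\om_0^k$ and using the telescoping identity $(\pi^*\om-\pi^*\om_0)\wedge\beta = \pi^*\om^r - \pi^*\om_0^r$ yields
\[
\ddc\bigl[-\log\norm{s}^2\,\beta\bigr] + \delta_E\wedge\beta = \pi^*\om^r - \pi^*\om_0^r .
\]
We have $\pi^*\om_0^r = f^*(\text{Fubini--Study})^r = 0$, since that form is pulled back from $\PP^{r-1}$ and has degree $2r > 2(r-1)$. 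On $E = \PP(N_Z)$, $\pi^*\om$ is pulled back from $Z$, so only the term $k=r-1$ contributes after pushforward. The restriction $\pi^*\om_0|_E$ is $c_1(\OO_{\PP(N)}(1))$ twisted appropriately, so $\pi_*(\delta_E\wedge\pi^*\om_0^{r-1}) = \delta_Z$ by the fibre integral of $\om_{FS}^{r-1}$ over $\PP^{r-2}$-bundles with degree one. Pushing forward by $\pi$, which commutes with $\ddc$, and noting that $\pi_*[\text{smooth } L^1_{\mathrm{loc}}] = [\Lambda]$ because $\pi$ is an isomorphism off $Z$ and $\Lambda$ is locally integrable, gives the statement.

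The main obstacle is this last step: checking local integrability of $\Lambda$ on $X$ and the exact normalization of $\pi_*(\delta_E\wedge\pi^*\om_0^{r-1})=\delta_Z$, i.e. that the fibre integral equals one, which requires the multiplicity-one statement for the regular embedding. I would first try to reduce it to a local computation with $L$ trivial and $s_k = z_k$ coordinates, where everything is explicit.
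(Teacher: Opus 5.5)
\paragraph{Comparison with the paper.} The paper gives no proof of this statement; it is quoted from Griffiths--King. The relevant comparison is therefore with the non-archimedean analogue, Theorem~\ref{thm:levine_gh}. There the paper specializes Theorem~\ref{thm:bost-g-s} to $F = L^{\oplus r}$ and checks $c_r(\metr F)=c_1(\metr L)^r$ and $f^*c_{r-1}(\metr{Q_F})=\sum_k\alpha^{r-1-k}\wedge\beta^k$, using multiplicativity of Chern forms and a binomial identity.

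Your argument is sound, apart from the slips listed below, and it takes a more direct route. You rerun the proof of Theorem~\ref{thm:bost-g-s} with the explicit closed form $\beta=\sum_k\pi^*\om^{r-1-k}\wedge\pi^*\om_0^k$ in place of $f^*c_{r-1}(\metr{Q_F})$. The Whitney-formula step then becomes the telescoping identity together with $(\pi^*\om_0)^r=f^*q^*\om_{\mathrm{FS}}^r=0$. The Segre-class step becomes a fibre integral over $E\cong Z\times\PP^{r-1}$.

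In the complex setting this is the better choice. Your key identity $\pi^*\om-\pi^*\om_0=c_1(\metr{\OO(E)})$ holds exactly as forms, so neither a Bott--Chern form $\eta$ nor the binomial identity is needed. Going through Theorem~\ref{thm:bost-g-s-complex} instead would require identifying $f^*c_{r-1}(\metr{Q_F})$ with $\beta$ as forms. That identification is true here, because $Q_F\cong p^*L\otimes q^*Q_{\PP^{r-1}}$ and the $U(r)$-invariant Chern forms of $Q_{\PP^{r-1}}$ are powers of $\om_{\mathrm{FS}}$, which also allows $\eta=0$. But it needs an extra argument. What the paper's route buys in exchange is generality: it handles an arbitrary metrized $F$ and obtains the split case by pure Chern-class algebra.

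\paragraph{Details to correct.}
\begin{itemize}
\item The isomorphism you state is wrong. The correct one is $f^*\OO_F(-1)\cong\OO(E)$, the line through $\pi^*\sigma$, on which $\pi^*\sigma$ becomes $s$. Equivalently, $\pi^*L\otimes\OO(-E)\cong f^*q^*\OO(1)$. Your next display only makes sense with this version, and it then reads $\pi^*\om_0=c_1(\pi^*\metr L\otimes f^*\metr{\OO_F(1)})=f^*q^*\om_{\mathrm{FS}}$.
\item The fibres of $E\to Z$ are $\PP^{r-1}$, not $\PP^{r-2}$. This is exactly why only the term $k=r-1$ survives.
\item The normalization you flag is the complex counterpart of Lemma~\ref{lm:closure}. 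Regularity gives $N_Z\cong F|_Z$, so $f$ maps $E$ isomorphically onto $p^{-1}(Z)=Z\times\PP^{r-1}$, and the fibre integral is $\int_{\PP^{r-1}}\om_{\mathrm{FS}}^{r-1}=1$. If the zero scheme of $\sigma$ is non-reduced, $E$ carries its multiplicity, and $\delta_Z$ must be read as the current of the fundamental cycle.
\item Local integrability of $\Lambda$ is not actually an obstacle. Set $u=-\log\norm{s}^2\,\beta\in L^1_{\mathrm{loc}}(\t X)$; since $\pi$ is biholomorphic off $E$, for every test form $\eta$ you have $\int_{X\setminus Z}|\Lambda\wedge\eta|=\int_{\t X\setminus E}|u\wedge\pi^*\eta|<\infty$. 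This gives both $\Lambda\in L^1_{\mathrm{loc}}$ and $\pi_*[u]=[\Lambda]$.
\item $\t X$ is singular when $Z$ is, so Poincaré--Lelong has to be applied on this integral complex space. That is standard, but it should be said.
\end{itemize}
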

	
	This theorem is itself a generalization of previous results. In the case where $X = \PP^n$, $L = \OO(1)$ and $s_k = X_k$ the result was previously known under the name of Levine formula (\cite{levine--TheoremHolomorphicMappingsComplex1960}, see also \cite[proposition~5.1] {gillet_soule--CharacteristicClassesAlgebraicVector1990}). 
	This name is nowadays also used to refer to theorem~\ref{thm:levine_gh_complex}.
	
	In the case where $L = \OO_X$, the definition of $\Lambda$ simplifies to 
	\[\Lambda = - \log \norm{f}^2 (\ddc \log \norm{f}^2)^{p-1}\] and the resulting theorem was also previously known under the name of Martinelli formula (\cite{martinelli--AlcuniTeoremiIntegraliFunzioni1938}, see also \cite[proposition~1.10] {griffiths_king--NevanlinnaTheoryHolomorphicMappings1973}).
	
	We prove in theorem~\ref{thm:levine_gh} a non-archimedean analog of theorem~\ref{thm:levine_gh_complex}, as well as of the two aforementioned corollaries. Since the differential form $\om_0$ in the complex theorem is defined only on $U = X \setminus Z$, we must once again slightly rephrase the statement to instead involve currents and the blow-up $\t X$, just as we did with Bost-Gillet-Soulé formula.
	
	\medskip 
	
	This paper is organized as follows.	
	In the first part, we recall the objects and definitions involved in the work of \textcite{bloch_gillet_soule--NonArchimedeanArakelovTheory1995}, and we prove several compatibility lemmas between these objects, including a projection formula in proposition~\ref{prop:projection_formula}. In the second part we discuss metrics on vector bundles and Chern forms, and we generalize in proposition~\ref{prop:poincare_lelong} the Poincaré-Lelong equation from \textcite{bloch_gillet_soule--NonArchimedeanArakelovTheory1995}.
	
	In a third section, we present two new constructions of currents. The first one explains how to construct the infimum of two currents in $\mcurr{0}(X)$. This will later be used to define the current $\inf (\divnu{s_k})$, which we see as the non-archimedean equivalent of $-\log \sum \norm{s_k}^2$. The second one is a gluing construction, allowing us to recover a full current from local data.
	
	In section 4, we state and prove theorem~\ref{thm:bost-g-d-letter}.	
	Then in section 5, we spend some time defining a current $\divnu{\sigma}$, thought of as the analog of~$[-\log \norm{\sigma}^2]$, using the constructions of section \ref{sec:construct}. We show in proposition~\ref{prop:divnusigma_bien_def} that 
	\[\pi_* \divnu{s} = \divnu{\sigma},\]
	justifying our definition in view of~(\ref{eq:norm_s_sigma}).
	
	Finally, in the sixth section we focus on the special case $F = L \oplus \dots \oplus L$ to state and prove a non-archimedean counterpart of theorem~\ref{thm:levine_gh_complex}. This result then implies analogs for both the classical Levine formula and Martinelli formula.
	
	\paragraph{Acknowledgments.} I would like to warmly thank my advisor Antoine Chambert-Loir for his very valuable advice and his careful proofreading of this paper.

	\section{Non-archimedean Arakelov theory \textit{à la} Bloch-Gillet-Soulé}
	
	In this section we mainly recall the definitions and objects introduced in \textcite{bloch_gillet_soule--NonArchimedeanArakelovTheory1995}. 
	We fix an excellent discrete valuation ring~$R$ over which to work, with fraction field~$K$ and residue field~$\kappa$.

	\subsection{Models}\label{sec:models}
	Given~$X$ a proper smooth variety over~$K$, a \emph{model} $\XX$ of~$X$ is a regular proper flat scheme over~$R$ together with an isomorphism $\XX_K \simeq X$ between the generic fiber of $\XX$ and $X$. A \emph{map of models} between $\XX'$ and~$\XX$ is a map of $R$-schemes $\pi \colon \XX' \to \XX$ which preserves the generic fiber.
	Given a model $\XX$ of~$X$, we write~$\XX_0$ for its special fiber, and if $\pi \colon \XX' \to \XX$, we will write $\pi_0$ for the induced map between special fibers.
	
	We shall make the following hypothesis : every proper smooth variety $X$ over~$K$ admits a model, and given two models $\XX$ and $\XX'$ of $X$, there exists a third model $\XX''$ and maps of models $\XX'' \to \XX$ and $\XX'' \to \XX'$. This is slightly weaker than hypotheses (M\textsubscript{1}) and (M\textsubscript{2}) made in \textcite{bloch_gillet_soule--NonArchimedeanArakelovTheory1995}, and it is fulfilled assuming resolution of singularities for excellent schemes over $R$. In particular, this hypothesis is satisfied when $X$ is a curve, or in equal characteristic zero.
	
	Note that the hypothesis implies that the set of models above a given model~$\XX$ is cofinal.

	\subsection{Bivariant classes}\label{sec:chow}
	\textcite{bloch_gillet_soule--NonArchimedeanArakelovTheory1995}  use bivariant intersection theory on all models of $X$, as well as on the special fibers of these models, to construct non-archimedean analogs of differential forms and currents on $X$. 
	
	Note that even though these models are not schemes over a field, the constructions and results of intersection theory apply to them \parencite[§20.1]{fulton--IntersectionTheory1998}. In this paper, we will however use an absolute notion of dimension, in accordance with \textcite{bloch_gillet_soule--NonArchimedeanArakelovTheory1995}. We denote by $Z_k(\placeholder)$ and $\CH_k(\placeholder)$ the group of~$k$-cycles and the corresponding Chow group, respectively.
	
	\medskip 
	
	Let $Y$ and $Z$ be either varieties over a field, or finite-type separated {$R$-schemes}, and let $f \colon Y \to Z $ be a map between them. Recall that a \emph{bivariant class} in~$\CH^l(f)$ is the data, for all $g \colon Z' \to Z$, of maps $\CH_k(Z') \to \CH_{k-l}(Y \times_Z Z')$, satisfying certain axioms, for the details of which see \textcite[chapter 17] {fulton--IntersectionTheory1998}.
	We will refer to these axioms using the numbering system employed in this chapter, of the form (C\textsubscript{1}) or (A\textsubscript{23}).
	
	We write $\CH^\sbullet(Y)$ instead of~$\CH^\sbullet(\Id_Y \colon Y \to Y)$, it has a ring structure given by composition. 
	We also denote by~$\eval{Y}$ the evaluation map $\CH^\sbullet(Y) \to \CH_\sbullet(Y)$ given by $c \mapsto c(\Id_Y, [Y])$. 
	
	If~$\XX$ is a model of a smooth proper variety~$X$ over~$K$, then the map~$\eval{\XX}$ is an isomorphism: we say that there is Poincaré duality on~$\XX$. The regularity of $\XX$ also implies that the bivariant Chow group $\CH^\sbullet(\XX)$ is commutative (for both claims, see \textcite[§1.2]{bloch_gillet_soule--NonArchimedeanArakelovTheory1995} and \textcite[§3]{kleiman_thorup--IntersectionTheoryEnumerativeGeometry1987}).
	However neither of these two facts is true in general for the special fiber~$\XX_0$, as it may fail to be regular.
	
	\subsection{Maps between Chow groups}
	Given $f \colon Y \to Z$ as above, we can construct several operations on Chow rings:
	\begin{itemize}
		\item Without additional hypotheses on $f$, a pullback map $f^* \colon \CH^\sbullet(Z) \to \CH^\sbullet(Y)$ is defined \parencite[§17.2]{fulton--IntersectionTheory1998}.
		\item If~$f$ is a proper morphism, we can define the usual pushforward of cycles $f_* \colon \CH_\sbullet(Y) \to \CH \sbullet(Z)$.
		\item If~$f$ is a lci morphism, then a map $f^! \colon \CH_\sbullet(Z) \to \CH _\sbullet (Y)$ is defined \parencite[§6.6] {fulton--IntersectionTheory1998}. Note that this map defines a bivariant class, which we denote by $[f] \in \CH^\sbullet(f)$. It satisfies $[f]([Z]) = [Y]$ (see \cite[§17.4]{fulton--IntersectionTheory1998}).
		\item Under the two previous hypotheses, we can define $f_! \colon \CH^\sbullet(Y) \to \CH^\sbullet(Z)$ by $(f_!c) \colon \alpha \mapsto  f_*(c \proddot [f] (\alpha))$.
	\end{itemize}
	
	If $X' \to X$ is a morphism of proper smooth $K$-varieties, $\XX$ a model of $X$ and $g \colon \XX' \to \XX$ a model of $X'$ over $\XX$, then $g$ and $g_0 \colon \XX'_0 \to \XX_0$ satisfies the two above conditions \parencite[§1.6] {bloch_gillet_soule--NonArchimedeanArakelovTheory1995}.
	Therefore the four above constructions are well-defined for both $g$ and $g_0$. 
	
	\subsection{Bloch-Gillet-Soulé theory}
		Let~$X$ be a proper smooth variety over~$K$ of dimension~$n$.
		 \textcite{bloch_gillet_soule--NonArchimedeanArakelovTheory1995} propose the following definitions: 
	\begin{description} 
		\item \textit{The closed forms of type $(k,k)$ on $X$}: \[\cform{k}(X) = \colim \CH^k(\XX_0)\] where the colimit ranges over all models of~$X$, with respect to the maps~$\pi_0^*$.
		
		\item \textit{The forms of type $(k,k)$ modulo the image of $\dz$ and $\dbz$}: \[\mform{k}(X) = \colim \CH_{n-k}(\XX_0)\] where the colimit ranges over all models of $X$, with respect to the maps~$\pi_0^!$.
		
		\item \textit{The closed currents of type $(k,k)$}: \[\ccurr{k}(X) = \lim \CH^k(\XX_0)\] where the limit ranges over all models of $X$, with respect to the maps~$\pi_{0!}$.
		
		\item \textit{The currents of type $(k,k)$ modulo the image of $\dz$ and $\dbz$}: \[\mcurr{k}(X) = \lim \CH_{n-k}(\XX_0)\] where the limit ranges over all models of $X$, with respect to the maps~$\pi_{0*}$.
	\end{description}
	
	If $T$ is a current in $\ccurr{}(X)$ or in $\mcurr{}(X)$, we will usually denote by $T_\XX$ its component on a model $\XX$.
	
	We have the following examples and constructions of forms and currents : 
	\begin{description}
		\item \textit{Inclusion maps} $[\, \cdot \, ] \colon \cform{}(X) \to \ccurr{}(X)$ and $\mform{}(X) \to \mcurr{}(X)$: they are defined in a way such that if $\om \in \cform{}(X)$ (resp. $\mform{}(X)$) is represented on a model~$\XX$ by $\om_\XX$, then for every model $\pi \colon \XX' \to \XX$ above~$\XX$, we have $[\om]_{\XX'} = \pi_{0}^* (\om_\XX)$ (resp.~$\pi_0^! (\om_\XX)$).
		
		\item \textit{Products}: if~$\om \in \cform{}(X)$ and $\eta$ is in one of the four groups, we can define a product $\om \wedge \eta$, of the same nature as $\eta$. 
		If $\eta \in \cform{}$ or~$\ccurr{}$, $\om \wedge \eta$ is represented by the product of the corresponding bivariant classes, otherwise it is represented by evaluating the classes defining~$\om$ at the cycles defining~$\eta$.
		
		\item \textit{Current of integration}: given~$Z \sbsq X$ a closed subvariety of codimension~$k$, the current of integration over~$Z$ is the element $\delta_Z \in \ccurr{k}$ defined by \[(\delta_Z)_\XX = i^* \eval{\XX}^{-1} \closure[\XX]{Z}\] where $\closure[\XX]{Z}$ is the closure of $Z$ in $\XX$ and $i$ is the inclusion $\XX_0 \into \XX$.
	\end{description}
	We also have analogs of these classical constructions:
	\begin{description}
		\item \textit{Differentiation maps} $\ddc \colon \mform{k}(X) \to \cform{k+1}(X)$ and $\mcurr{k}(X) \to \ccurr{k+1}(X)$: they are given on a model $\XX$ by the composition $i^* \circ \eval{\XX}^{-1} \circ i_*$, where $i$ is the inclusion of the special fiber of~$\XX_0$.
		
		\item \textit{Pullback of forms}: given $f \colon Y \to X$ a map of varieties over~$K$, let~$\XX$ be a model of~$X$ and $g \colon \YY \to \XX$ a model of~$Y$ over~$\XX$. If $\om \in \cform{k}(X)$ (resp.~$\mform{k}(X)$) is represented on~$\XX$ by~$\om_\XX$, we define $f^*\om \in \cform{k}(Y)$ (resp.~$\mform{k}(Y)$) represented on~$\YY$ by $g_0^*(\om_\XX)$ (resp.~$g_0^! (\om_\XX)$).
		
		\item \textit{Pushforward of currents}: in the same setup, given~$T$ a current on~$Y$, we define a current $f_* T$ on~$X$ by pushing forward the components of~$T$ using $g_{0*}$ or~$g_{0!}$ respectively.
	\end{description}
	
	We end this section by proving the projection formula, which is stated in \textcite[proposition~1.6.2] {bloch_gillet_soule--NonArchimedeanArakelovTheory1995} but whose proof was omitted, as well as a few compatibility lemmas.
		\begin{lm}\label{lm:bgs1}
		Let $g \colon \YY \to \XX$ be a proper map between $R$-schemes such that $g_!$ is well-defined.
		As above, denote by~$\eval{\XX}$ the evaluation map $\CH^\sbullet(\XX) \to \CH_\sbullet(\XX)$ given by $c \mapsto c(\Id_\XX, [\XX])$, and similarly for~$\eval{\YY}$. 
		Let also~$i$ and~$j$ be the inclusions $\XX_0 \into \XX$ and $\YY_0 \into \YY$, respectively. 
		Then the two following diagrams are commutative squares: 
		\[\begin{tikzcd}
			{\CH^\sbullet(\YY)} & {\CH^\sbullet(\XX)} \\
			{\CH^\sbullet(\YY_0)} & {\CH^\sbullet(\XX_0)}
			\arrow["{g_!}", from=1-1, to=1-2]
			\arrow["{j^*}", from=1-1, to=2-1]
			\arrow["{i^*}", from=1-2, to=2-2]
			\arrow["{g_{0!}}"', from=2-1, to=2-2]
		\end{tikzcd}
		\quad \text{and} \quad
		\begin{tikzcd}
			{\CH^\sbullet(\YY)} & {\CH^\sbullet(\XX)} \\
			{\CH_\sbullet(\YY)} & {\CH_\sbullet(\XX)}
			\arrow["{g_!}", from=1-1, to=1-2]
			\arrow["{\eval{\YY}}", from=1-1, to=2-1]
			\arrow["{\eval{\XX}}", from=1-2, to=2-2]
			\arrow["{g_*}"', from=2-1, to=2-2]
		\end{tikzcd}\]
	\end{lm}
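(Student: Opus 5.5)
Both squares will be checked directly from the definition $g_!c = \bigl(\alpha \mapsto g_*(c\proddot[g](\alpha))\bigr)$, using the bivariant axioms of \textcite[chapter 17]{fulton--IntersectionTheory1998}. The square on the right is the easier one, and I will do it first. Take $c \in \CH^\sbullet(\YY)$. Then
\[\eval{\XX}(g_! c) = (g_!c)(\Id_\XX,[\XX]) = g_*\bigl(c \proddot [g]([\XX])\bigr) = g_*\bigl(c([\YY])\bigr) = g_*\eval{\YY}(c),\]
where the third equality is $[g]([\XX]) = [\YY]$, recalled in the bullet list above (\cite[§17.4]{fulton--IntersectionTheory1998}). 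The only point to watch is that $c\proddot[g]$ means applying $[g]$ first and then $c$, as bivariant classes on $\YY$ over $\XX$. So this square is essentially formal.

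For the left square, $i^*$ is the bivariant pullback along the base change $i\colon \XX_0\into\XX$. Since $\YY_0 = \YY\times_\XX \XX_0$, for $c\in\CH^\sbullet(\YY)$ the class $j^*c$ is simply $c$ restricted to maps $Z'\to\YY_0$. I will evaluate both $i^*(g_!c)$ and $g_{0!}(j^*c)$ on an arbitrary $h\colon X'\to\XX_0$ and $\alpha\in\CH_\sbullet(X')$, and compare.

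\emph{First route.} By definition of pullback, $(i^*g_!c)(\alpha)=(g_!c)(i\circ h,\alpha)$. Write $Y' = \YY\times_\XX X' = \YY_0\times_{\XX_0}X'$ and $g'\colon Y'\to X'$. This equals $g'_*\bigl(c([g](\alpha))\bigr)$, where $[g]$ is applied to $(X'\to \XX,\alpha)$.

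\emph{Second route.} $g_{0!}(j^*c)(\alpha) = g'_*\bigl(c([g_0](\alpha))\bigr)$, with $[g_0]$ applied to $(X'\to\XX_0,\alpha)$.

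The two expressions therefore agree as soon as $i^*[g] = [g_0]$ in $\CH^\sbullet(g_0)$. In other words, the orientation class of $g$ must pull back to that of $g_0$: the Gysin map $g^!$ on $X'\to\XX$ must coincide with $g_0^!$ on $X'\to\XX_0$.

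I expect this compatibility to be the main obstacle. The difficulty is that $g_0$ is lci only because $g$ is lci and $\YY_0\to\XX_0$ is its base change. To handle it, I will factor $g$ as a regular embedding $\YY\into P$ followed by a smooth map $P\to\XX$ (available since $\YY,\XX$ are regular and $g$ projective or locally so). Base change to $\XX_0$ then gives the factorization of $g_0$. The question reduces to whether this base change is still a regular embedding of the same codimension with normal bundle pulled back, i.e.\ whether the square is Tor-independent. This should follow from flatness of $\YY$ and $\XX$ over $R$: $\XX_0$ is the Cartier divisor $\pi=0$ and $\pi$ is a nonzerodivisor on $\YY$. Granting this, Fulton's Gysin maps for regular embeddings and flat pullbacks are compatible with such base change (\cite[§6.2, Theorem 6.2]{fulton--IntersectionTheory1998}, and the refined-Gysin definition of $[f]$ in §17.4). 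This gives $i^*[g]=[g_0]$, which is exactly the hypothesis under which BGS note that $g_{0!}$ is well-defined (§1.6). Once this is established, both squares commute.
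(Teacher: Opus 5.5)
Your proof is correct and follows the paper's. The right square is the identical one-line computation. For the left square, the paper likewise reduces everything to $[g_0]=i^*[g]$, invoking axioms (A\textsubscript{13}) and (A\textsubscript{23}) where you unwind the same identities by evaluating on $(h,\alpha)$. The only real difference is that the paper simply cites Bloch--Gillet--Soul\'e \S1.3 for $[g_0]=i^*[g]$, whereas you prove it: your factorization and Tor-independence argument (which uses flatness of the models over $R$), combined with Fulton's Theorem~6.2(c), is valid.
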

	
	\begin{proof}
		Let $c \in \CH^\sbullet(\YY)$. 
		For the first diagram:
		\begin{align*}
			g_{0!}j^*(c) & = g_{0*}(j^*c \cdot [g_0]) \\ 
			& = g_{0*} (j^*c \cdot i^*[g]) && \text{\parencite[§1.3] {bloch_gillet_soule--NonArchimedeanArakelovTheory1995}}\\ 
			& = g_{0*}i^*(c \cdot [g]) && \text{\parencite[(A\textsubscript{13}) p. 323] {fulton--IntersectionTheory1998}} \\
			& = i^* g_* (c \cdot [g]) && \text{\parencite[(A\textsubscript{23})] {fulton--IntersectionTheory1998}}\\
			& = i^* g_!(c).
		\end{align*}
		
		For the second diagram,
		\begin{align*}
			\eval{\XX}(g_! c) & = g_!(c)([\XX]) 
			= g_*\bigl(c ([g][\XX])\bigr) 
			= g_*\bigl(c([\YY]) \bigr) 
			= g_*(\eval{\YY}(c))
		\end{align*}
		which ends the proof.
	\end{proof}

	\begin{lm}\label{lm:proj_chow}
		Let $g \colon \YY \to \XX$ as above, and let $c \in CH^\sbullet(\XX)$ and $d \in \CH^\sbullet(\YY)$. Then 
		\[g_! ( g^* c \proddot d) = c \proddot g_! (d)\]
	\end{lm}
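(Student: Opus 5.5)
The plan is to unwind the definition of $g_!$ and apply the bivariant axioms from \textcite[chapter 17]{fulton--IntersectionTheory1998}. By definition, for any $h \colon \XX' \to \XX$ and $\alpha \in \CH_\sbullet(\XX')$, writing $h' \colon \YY' = \YY \times_\XX \XX' \to \YY$ and $g' \colon \YY' \to \XX'$ for the base changes, we have
\[ g_!(g^*c \proddot d)(\alpha) = g'_*\bigl( (g^*c \proddot d \proddot [g])(\alpha)\bigr) = g'_*\bigl( (g^*c)(d([g](\alpha)))\bigr), \]
with composition read as first applying $[g]$, then $d$, then $g^*c$. On the other side,
\[ \bigl(c \proddot g_!(d)\bigr)(\alpha) = c\bigl( g'_*(d([g](\alpha)))\bigr). \]
So it suffices to show, for $\beta = d([g](\alpha)) \in \CH_\sbullet(\YY')$, that
\[ g'_*\bigl((g^*c)(\beta)\bigr) = c(g'_*\beta). \]
This is exactly the compatibility of the pulled-back class $g^*c$ with proper pushforward. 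By definition of $g^*$ \parencite[§17.2]{fulton--IntersectionTheory1998}, $(g^*c)(\beta)$ means $c$ applied with respect to the map $\YY' \to \XX$ (that is, $h \circ g'$). The required identity is then axiom (C\textsubscript{1}), which says bivariant classes commute with proper pushforward, applied to the proper map $g'$ between the $\XX$-schemes $\YY'$ and $\XX'$.

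Before that, I must justify that the order of composition in the product is harmless. The product $g^*c \proddot d$ in $\CH^\sbullet(\YY)$ is composition. The paper notes that $\CH^\sbullet(\XX)$ is commutative for regular $\XX$, so if needed I can write the product in either order. On the non-regular special fibre this commutativity is not available, so the argument must apply the factors in the order written. The only issue is whether the convention is $g^*c\proddot d = g^*c \circ d$ or $d \circ g^*c$.

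If the convention is the opposite one, I would instead use axiom (C\textsubscript{2}) together with the fact that $[g]$ commutes with the relevant classes, or commutativity on regular $\YY$. For the special-fibre application, the same argument works at the level of general $R$-schemes, since only the axioms are used.

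I expect this convention check to be the main obstacle, rather than any deep step. Once the order matches, the statement reduces to the single line given by (C\textsubscript{1}).
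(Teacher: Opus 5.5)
Your argument is correct and is essentially the paper's proof: the paper writes $g_!(g^*c\proddot d) = g_*(g^*c\proddot d\proddot[g])$ and applies Fulton's bivariant projection formula (A\textsubscript{123}) to get $c\proddot g_*(d\proddot[g]) = c\proddot g_!(d)$, and your evaluation on an arbitrary $\alpha\in\CH_\sbullet(\XX')$ followed by (C\textsubscript{1}) for the proper base change $g'$ is simply that axiom unwound in this instance. The convention you worry about is fixed by Fulton's definition of the product, $(c\proddot d)(\alpha) = c(d(\alpha))$, which is also how the paper's definition of $g_!$ reads ($\alpha\mapsto g_*(c\proddot[g](\alpha))$, with $[g]$ acting first), so your main computation is the right one and the fallback via (C\textsubscript{2}) or commutativity is unnecessary (commutativity would in any case be unavailable on the special fibres, where the lemma is actually applied).
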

	\begin{proof}
		Indeed using the definition of $g_!$ and \textcite[(A\textsubscript{123}) p. 323] {fulton--IntersectionTheory1998}, we have:
		\[g_! ( g^* c \proddot d) = g_* (g^* c \proddot d \proddot [g]) = c \proddot g_*(d \proddot [g]) = c \proddot g_!(d).\]
	\end{proof}
	
	\begin{prop}[Projection formula]\label{prop:projection_formula}
		Let~$f \colon Y \to X$ be a morphism between proper smooth $K$-schemes, $\om \in \cform{}(X)$ and $T \in \mcurr{}(Y)$ (resp. $\ccurr{}(Y)$). 
		Then 
		\[f_*(f^*(\om)\wedge T) = \om \wedge f_*(T).\]
	\end{prop}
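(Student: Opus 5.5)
We prove the identity componentwise, by fixing a model $\XX$ of $X$ and comparing the components of both sides on $\XX$. Let $\om$ be represented on some model; since the set of models above a given model is cofinal, we may enlarge $\XX$ and assume $\om$ is represented on $\XX$ by a class $\om_\XX \in \CH^\sbullet(\XX_0)$. Then $[\om]_{\XX'} = \pi_0^*\om_\XX$ for every model $\pi\colon\XX'\to\XX$. Because a current is determined by its components on a cofinal family of models, it suffices to check the equality on such $\XX$.

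Choose a model $g\colon \YY \to \XX$ of $Y$ over $\XX$. By the definition of pullback of forms, $f^*\om$ is represented on $\YY$ by $g_0^*\om_\XX$. Since $[f^*\om]_{\YY'} = h_0^* g_0^*\om_\XX$ for any model $h\colon\YY'\to\YY$, the component of $f^*\om\wedge T$ on $\YY$ is $g_0^*\om_\XX \proddot T_\YY$ in the closed-current case, and $g_0^*\om_\XX(T_\YY)$ (evaluation on the cycle) in the $\mcurr{}$ case. The component of $f_*(f^*\om\wedge T)$ on $\XX$ is then $g_{0!}(g_0^*\om_\XX\proddot T_\YY)$, respectively $g_{0*}\bigl(g_0^*\om_\XX(T_\YY)\bigr)$. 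On the other side, the component of $\om\wedge f_*T$ on $\XX$ is $\om_\XX\proddot g_{0!}T_\YY$, respectively $\om_\XX(g_{0*}T_\YY)$.

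The two components agree by the following identities.
\begin{itemize}
\item In the $\mcurr{}$ case, the required identity $g_{0*}(g_0^*c\,(\alpha)) = c\,(g_{0*}\alpha)$ for $c\in\CH^\sbullet(\XX_0)$ and $\alpha\in\CH_\sbullet(\YY_0)$ is exactly the projection formula for bivariant classes: axiom (C\textsubscript{2}), i.e.\ compatibility of bivariant classes with proper pushforward, applied to $g_0$ and \textcite[§17.2]{fulton--IntersectionTheory1998} for the definition of $g_0^*$.
\item In the $\ccurr{}$ case, the required identity $g_{0!}(g_0^*c\proddot d) = c\proddot g_{0!}(d)$ is lemma~\ref{lm:proj_chow}, applied to $g_0\colon\YY_0\to\XX_0$ in place of $g$. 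This is legitimate because $g_{0!}$ is well defined, as $g_0$ is proper and lci, and lemma~\ref{lm:proj_chow} only uses (A\textsubscript{123}) together with the definition of $g_!$.
\end{itemize}

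The main point requiring care is not the algebra but the well-definedness. The component of $f_*(\cdot)$ on $\XX$ must be computed via any model $\YY$ over $\XX$, independently of the choice; this is part of the Bloch--Gillet--Soulé definition of $f_*$ and relies on the compatibility of the systems. The product $f^*\om\wedge T$ must also be computed on $\YY$ using the representative $g_0^*\om_\XX$, which holds since $\om$ is represented on $\XX$ and pulls back compatibly. A further subtlety in the $\ccurr{}$ case is that $\CH^\sbullet(\XX_0)$ need not be commutative, so we must keep the factor order consistent with the definition of $\wedge$; lemma~\ref{lm:proj_chow} has the order $c\proddot g_!(d)$ matching $g^*c\proddot d$, so no commutativity is needed.
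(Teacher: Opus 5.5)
Your proof is correct and takes essentially the same route as the paper. You reduce to models on which $\om$ is represented, compute both components through a model of $Y$ over such a model, and conclude with the bivariant axioms in the $\mcurr{}$ case and with lemma~\ref{lm:proj_chow} applied to the special-fiber map in the $\ccurr{}$ case; the paper only differs in writing the reduction with an auxiliary square $\YY'\to\YY$ over $\XX'\to\XX$.

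One small correction: in Fulton, compatibility with proper pushforward is axiom (C\textsubscript{1}), not (C\textsubscript{2}), which is compatibility with flat pullback. The step $(g_0^*c)(\Id,\alpha)=c(g_0,\alpha)$ is (P\textsubscript{3}), as the paper cites.
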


	\begin{proof}
		Let~$\XX$ be a model of~$X$ on which $\om$ is represented by a class $c \in \CH^\sbullet(\XX_0)$, we only have to prove the equality of currents for models above~$\XX$. 
		Let $\XX' \to \XX$ be such a model, and let $\YY' \to \YY$ be models of~$Y$ over $\XX'$ and $\XX$ respectively, so that we have the following commutative diagram:
		\[\begin{tikzcd}
			{\YY'} & \YY \\
			{\XX'} & \XX
			\arrow["{\pi'}", from=1-1, to=1-2]
			\arrow["{g'}"', from=1-1, to=2-1]
			\arrow["g", from=1-2, to=2-2]
			\arrow["\pi", from=2-1, to=2-2]
		\end{tikzcd}\]
		
		Let us first do the case where $T \in \mcurr{}(Y)$, with $T_{\YY'} \in \CH_\sbullet(\YY'_0)$. 
		\begin{align*}
			\bigl(f_*\left(f^*\om \wedge T\right)\bigr)_{\XX'} & = g'_{0*} \bigl( (\pi'^*_0g^*_0c)(\Id, T_{\YY'}) \bigr)  \\
			& = g'_{0*} \bigl((g_0'^*\pi_0^*c)(\Id, T_{\YY'})\bigr) && \text{as } g_0 \circ \pi'_0 = \pi_0 \circ g'_0 \\
			& = g'_{0*}\bigl( (\pi_0^*c)(g'_0, T_{\YY'})\bigr) 
			&& \text{\parencite[(P\textsubscript{3}) p. 322] {fulton--IntersectionTheory1998}}\\ 
			& = (\pi_0^*c)(\Id, g'_{0*} T_{\YY'}) 
			&&\text{\parencite[(C\textsubscript{1}) p. 320] {fulton--IntersectionTheory1998}} \\ 
			& = \bigl( \om\wedge f_*T\bigr)_{\XX'}  
		\end{align*}
		
		Next, if $T \in \ccurr{}(Y)$, with $T_{\YY'} \in \CH^\sbullet(\YY'_0)$, then:
		\begin{align*}
			\bigl(f_*\left(f^*\om \wedge T\right)\bigr)_{\XX'} & = g'_{0!} \bigl(\pi'^*_0 g_0^* c \proddot T_{\YY'} \bigr) \\
			& = g'_{0!} \bigl(g_0'^*\pi^*_0 c \proddot T_{\YY'} \bigr) && \text{as } g_0 \circ \pi_0' = \pi_0 \circ g_0'\\ 
			& = \pi_0^* c \proddot g'_{0!} T_{\YY'} && \text{by lemma \ref{lm:proj_chow}}\\ 
			& = \bigl( \om\wedge f_*T\bigr)_{\XX'}
		\end{align*}
		The set of models~$\XX'$ above~$\XX$ being cofinal, this proves the desired result.
	\end{proof}
	
	\begin{lm}\label{lm:ddc_pushforward}
		Let $f \colon Y \to X$ be a morphism between smooth proper varieties over $K$, and $T \in \mcurr{}(Y)$. 
		Then $f_*(\ddc T) = \ddc (f_* T)$.
	\end{lm}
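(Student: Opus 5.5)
We check the equality componentwise on models, using the cofinality of models above a fixed one. Let $T \in \mcurr{k}(Y)$. Fix a model $\XX$ of $X$ and a model $g \colon \YY \to \XX$ of $Y$ over $\XX$. Write $i \colon \XX_0 \into \XX$ and $j \colon \YY_0 \into \YY$ for the inclusions of the special fibers.

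By definition of pushforward and of $\ddc$,
\[(f_*\ddc T)_\XX = g_{0!}\bigl(j^*\eval{\YY}^{-1}j_* T_\YY\bigr), \qquad (\ddc f_*T)_\XX = i^*\eval{\XX}^{-1} i_* g_{0*} T_\YY.\]
We show these agree by moving $g$ through each map in turn.

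First, the first square of lemma~\ref{lm:bgs1} gives
\[g_{0!}j^* = i^* g_!.\]
So the left side becomes $i^* g_! \eval{\YY}^{-1} j_* T_\YY$.

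Next, the second square of lemma~\ref{lm:bgs1} gives $\eval{\XX} g_! = g_* \eval{\YY}$. Since $\eval{\XX}$ and $\eval{\YY}$ are isomorphisms by Poincaré duality on models, this rewrites as
\[g_!\eval{\YY}^{-1} = \eval{\XX}^{-1} g_*.\]
The left side is now $i^*\eval{\XX}^{-1} g_* j_* T_\YY$.

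Finally, functoriality of proper pushforward of cycles applied to $g\circ j = i \circ g_0$ gives
\[g_* j_* = i_* g_{0*}.\]
This yields exactly the right side.

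Since the models $\XX$ and models $\YY$ over them are cofinal, the components agree everywhere, and the two currents coincide. One point needs checking: the components of $f_*\ddc T$ and of $\ddc f_*T$ on $\XX$ must not depend on the choice of $\YY$ over $\XX$. This holds because $T$ and $\ddc T$ are compatible families, which is already implicit in the definition of $f_*$.

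The only step needing care is confirming that lemma~\ref{lm:bgs1} applies, i.e.\ that $g$ is proper and lci, so that $g_!$ is defined. This is the fact from \S1.6 of \textcite{bloch_gillet_soule--NonArchimedeanArakelovTheory1995} recalled above, so there is no real obstacle.
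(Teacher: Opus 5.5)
Your proof is correct and matches the paper's argument step for step. You apply the first square of lemma~\ref{lm:bgs1}, then the second square inverted via Poincaré duality on models, then functoriality of proper pushforward along $g\circ j = i\circ g_0$; your form of this identity is the right one, and the paper's ``$i\circ g = g_0\circ j$'' is a typo.
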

	
	\begin{proof}
		Let~$\XX$ be a model of~$X$, $g \colon \YY \to \XX$ a model of~$Y$ over~$\XX$. 
		We denote~$i$ and~$j$ the inclusions $\XX_0 \into \XX$ and $\YY_0 \into \YY$ respectively. 
		Then
		\begin{align*}
			\bigl(f_*(\ddc T)\bigr)_\XX & = g_{0!}j^* \eval{\YY}^{-1}j_* T_\YY \\ 
			& = i^*g_! \eval{\YY}^{-1}j_* T_\YY &&\text{by the first diagram in lemma~\ref{lm:bgs1}} \\
			& = i^* \eval{\XX}^{-1} g_* j_* T_\YY && \text{by the second diagram in the same lemma} \\
			& = i^* \eval{\XX}^{-1} i_* g_{0*} T_\YY &&\text{as }i\circ g = g_0\circ j \\
			& = \bigl(\ddc f_* T \bigr)_\XX
		\end{align*}
		as was to be shown.
	\end{proof}
	
	\begin{lm}\label{lm:compat_inclusion_wedge}		
		Let~$\om$,~$\eta \in \cform{}(X)$. Then $[\om\wedge\eta] = \om \wedge [\eta]$.
	\end{lm}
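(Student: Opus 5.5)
The plan is to compare the two sides model by model. Since both sides are elements of $\ccurr{}(X)$, an inverse limit, it suffices to check that their components agree on every model $\XX'$ lying above a fixed model $\XX$; the models above $\XX$ are cofinal, so this determines the currents.

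First I fix the representatives. Choose a model $\XX$ on which both forms are represented: $\om$ by $c \in \CH^\sbullet(\XX_0)$ and $\eta$ by $d \in \CH^\sbullet(\XX_0)$. If they are initially given on different models, I pass to a common model above both, using the hypothesis of section~\ref{sec:models} and pulling back along $\pi_0^*$. The product $\om\wedge\eta \in \cform{}(X)$ is then represented on $\XX$ by $c\proddot d$.

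Now let $\pi\colon \XX'\to\XX$ be a map of models. By definition of the inclusion map, the component of the left-hand side is $[\om\wedge\eta]_{\XX'} = \pi_0^*(c\proddot d)$. For the right-hand side, the definition of the product of a closed form with a closed current says that $\om\wedge[\eta]$ is represented by the product of the bivariant classes. So its component on $\XX'$ is $\pi_0^*c \proddot [\eta]_{\XX'} = \pi_0^*c\proddot \pi_0^* d$.

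It remains to use that bivariant pullback is a ring homomorphism, i.e. $\pi_0^*(c\proddot d) = \pi_0^*c\proddot\pi_0^*d$. This is the compatibility of products with pullbacks in \textcite[§17.2, (P\textsubscript{1})-(P\textsubscript{3}) and (A\textsubscript{13})]{fulton--IntersectionTheory1998}. With this identity the two components coincide on every $\XX'$ above $\XX$, which proves the lemma.

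The only point needing care is the order of the factors: $\CH^\sbullet(\XX_0)$ need not be commutative, since $\XX_0$ may fail to be regular. The statement, however, keeps $\om$ on the left in both products, so the ordering matches on each side and no commutativity is needed. The other mild point is the well-definedness of the product in the colimit, i.e. independence of the representing model. This also reduces to the multiplicativity of $\pi_0^*$ and is implicitly part of the definitions recalled above.
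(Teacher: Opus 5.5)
Your proof is correct and matches the paper's argument. Like the paper, you fix a common model $\XX$ on which $\om$ and $\eta$ are represented by $c$ and $d$, and on every model $\pi\colon\XX'\to\XX$ identify $\pi_0^*c\proddot\pi_0^*d$ with $\pi_0^*(c\proddot d)$ via (A\textsubscript{13}) of Fulton. Your remark that keeping $\om$ on the left avoids any need for commutativity of $\CH^\sbullet(\XX_0)$ is a correct, minor addition.
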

	
	\begin{proof}
		Let~$\XX$ be a model of~$X$ on which both $\om$ and $\eta$ admit representative $c$ and~$d$ in $\CH^\sbullet(\XX_0)$, such model exists by the hypothesis made in section~\ref{sec:models}.
		It is enough to show the equality on models of the form $\pi \colon \XX' \to \XX$, and for such models we have 
		\begin{align*}
			\bigl(\om \wedge [\eta]\bigr)_{\XX'} & = \pi_0^*(c) \proddot \pi_0^*(d) \\
			& = \pi_0^*(c \proddot d) 
			&& \text{\parencite[(A\textsubscript{13}) p. 323] {fulton--IntersectionTheory1998}} \\
			& = \bigl([\om \wedge \eta]\bigr)_{\XX'}
		\end{align*}
		This concludes the proof.
	\end{proof}
	
	\section{Metrics, Chern forms and the Poincar\'e-Lelong formula}\label{sec:poincaré-lelong}
	
	In this section, we recall and emphasize the discussion of \textcite{bloch_gillet_soule--NonArchimedeanArakelovTheory1995} on metrics on vector bundles. We then recall the non-archimedean equivalent of the function~$-\log\norm{s}^2$, where $s$ is a non-zero section of a given line bundle, and we prove in this setup an analog of the Poincaré-Lelong formula.
	
	\subsection{Metrics and Chern forms}
	
	\begin{defi}[\protect{\cite[1.9.1]{bloch_gillet_soule--NonArchimedeanArakelovTheory1995}}]\label{def:metric}		
		Let $F$ be a vector bundle on a proper smooth $K$-variety $X$. A \emph{metric} on~$F$ is a model $\mscr F$ of $F$ on $\lim \XX$, where the limit ranges over all models of $X$. In other words, a metric is the choice of a model $\XX$ and a vector bundle $\mscr F_\XX$ on $\XX$ with an isomorphism from the generic fiber of $\mscr F_\XX$ to $F$; and two such choices $\mscr F_\XX$ and $\mscr F_{\XX'}$ define the same metric if for any $\XX''$ mapping to both $\XX$ and $\XX'$, we have an isomorphism of models between the respective pullbacks.
		
		If a metric $\mscr F$ admits a representative $\mscr F_\XX$ on a model $\XX$, we will say that the model is \emph{compatible with the metric}. Note that the set of models compatible with a given metric is cofinal.

	\end{defi}
	
	\begin{defi}
		Given $F$ a vector bundle on $X$, $\mscr F$ a metric on $F$ and an integer~$k \geq 0$, we define the $k$-th \emph{Chern form} $c_k(\metr{F}) \in \cform{k}(X)$ by setting \[{(c_k(\metr{F}))_\XX = i^*(c_k(\mscr F_\XX) \cap \_ )}\] 
		on compatible models, where $i$ is the inclusion $\XX_0 \into \XX$.
	\end{defi}
	
	\medskip
	
	Let \[0 \to E \to F \to Q \to 0\] be an exact sequence of vector bundles on $X$, and let $\mscr F$ be a metric on $F$. We will define induced and quotient metric, following \textcite[1.9.7]{bloch_gillet_soule--NonArchimedeanArakelovTheory1995}.
		
		Let $\XX$ be a model of $X$ compatible with $\mscr F$. By the flattening theorems of \textcite[theorem 5.2.2]{raynaud_gruson--CriteresPlatitudeProjectivite1971}, there exists a model $\XX' \to \XX$ above~$\XX$, such that if we write $j$ the inclusion $X \to \XX'$, then the image of $\mscr F_{\XX'}$ into~$j_* Q$ is locally free.
		Denote by $\mscr Q_{\XX'}$ this image, and let $\mscr E_{\XX'}$ be the kernel of~$\mscr F_{\XX'} \to \mscr Q_{\XX'}$, it is also locally free.
	
	The vector bundle $\mscr Q_{\XX'}$ on $\XX'$ defines a metric on~$Q$, and one can see that this metric is independent of the choices of $\XX'$ and $\XX$. Indeed, this follows readily from the hypothesis made in~\ref{sec:models}, and the definition of $\mscr F$ being a metric.
	
	\begin{defi}\label{def:induced_metrics}
		The above metric on $Q$, defined on the model $\XX'$ by $\mscr Q_{\XX'}$, is called the \emph{quotient metric} on~$Q$. We define similarly the \emph{induced metric} on~$E$ using the vector bundle $\mscr E_{\XX'}$.
	\end{defi}

	\begin{lm}\label{lm:exact sq}
		Let \[0 \to E \to F \to Q \to 0\] an exact sequence of vector bundles. Endow $F$ with a metric, and endow $E$ and~$Q$ with the induced metric and the quotient metric respectively. Then for all~$k \geq 0$, we have the following equality in $\cform{k}(X)$:
		\[\ c_k(\metr{F}) = \sum_{i=0}^k c_i(\metr{E}) \wedge c_{k-i}(\metr{Q}).\]
	\end{lm}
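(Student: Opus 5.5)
We will prove the equality on a single well-chosen model, then transport it to the colimit. Choose, as in the construction preceding Definition~\ref{def:induced_metrics}, a model $\XX'$ compatible with $\mscr F$ on which the image $\mscr Q_{\XX'}$ of $\mscr F_{\XX'}$ in $j_*Q$ is locally free, and set $\mscr E_{\XX'} = \ker(\mscr F_{\XX'} \to \mscr Q_{\XX'})$. By construction the sequence
\[0 \to \mscr E_{\XX'} \to \mscr F_{\XX'} \to \mscr Q_{\XX'} \to 0\]
is an exact sequence of vector bundles on $\XX'$ whose generic fiber is the given sequence $0 \to E \to F \to Q \to 0$. In particular $\XX'$ is compatible with all three metrics, and it represents the induced and quotient metrics. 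By the definition of Chern forms, all three forms $c_k(\metr F)$, $c_i(\metr E)$ and $c_{k-i}(\metr Q)$ are therefore represented on $\XX'_0$ by $i^*$ of the corresponding bivariant Chern classes on $\XX'$, where $i \colon \XX'_0 \into \XX'$ is the inclusion.

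The key input is the Whitney sum formula for Chern classes as bivariant operations \parencite[theorem 3.2(e) and §17.2]{fulton--IntersectionTheory1998}, which also holds on $R$-schemes by \parencite[§20.1]{fulton--IntersectionTheory1998}. It gives
\[c_k(\mscr F_{\XX'}) = \sum_{i=0}^k c_i(\mscr E_{\XX'}) \proddot c_{k-i}(\mscr Q_{\XX'})\]
in $\CH^k(\XX')$. We then apply $i^*$, which is a ring homomorphism on bivariant groups by (A\textsubscript{13}) \parencite[p.~323]{fulton--IntersectionTheory1998}, and obtain
\[i^*c_k(\mscr F_{\XX'}) = \sum_{i} i^*c_i(\mscr E_{\XX'}) \proddot i^*c_{k-i}(\mscr Q_{\XX'})\]
in $\CH^k(\XX'_0)$. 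The right-hand side is exactly the representative on $\XX'$ of $\sum_i c_i(\metr E)\wedge c_{k-i}(\metr Q)$, since the product of two closed forms is represented by the product of their bivariant representatives. Equality of representatives on one model implies equality in the colimit $\cform{k}(X)$, because both sides are compatible with the pullbacks $\pi_0^*$ along models above $\XX'$. This compatibility follows from Lemma~\ref{lm:compat_inclusion_wedge}-style reasoning with (A\textsubscript{13}), together with the naturality of Chern classes under pullback, $\pi^* c_k(\mscr F) = c_k(\pi^*\mscr F)$.

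The main point to check is that the bivariant Whitney formula is legitimately available. Fulton proves it first as an equality of operations on cycles. The bivariant version follows from the fact that Chern classes of a bundle on $\XX'$ act on $\CH_\sbullet(Z')$ for every $g\colon Z' \to \XX'$ through $g^*$ of the bundle, and the pulled-back sequence remains exact because it consists of locally free sheaves. We will also check that the pullback of the exact model sequence to a higher model still realizes the induced and quotient metrics. This holds because the pulled-back $\mscr Q$ is still locally free and still equals the image of $\mscr F$ in $j_*Q$, so independence of the model (already noted in the paper) covers it.
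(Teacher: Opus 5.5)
Your proposal is correct and follows the paper's proof: both pick a model carrying the exact sequence $0 \to \mscr E_\XX \to \mscr F_\XX \to \mscr Q_\XX \to 0$ of locally free models and use the Whitney sum formula to identify the representatives in $\CH^k(\XX_0)$. The only cosmetic difference is that you apply Whitney on the model and then pull back by $i^*$ using (A\textsubscript{13}), while the paper applies it directly to the restricted bundles on the special fiber; these agree since $i^* c_k(\mscr F_\XX) = c_k(i^*\mscr F_\XX)$.
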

	\begin{proof}
		Let $\XX$ be a model of $X$ compatible with all three metrics, and $i\colon \XX_0 \into \XX$ the inclusion of the special fiber. By definition of the induced and quotient metric, we have an exact sequence 
		\[0 \to \mscr E_\XX \to \mscr F_\XX \to \mscr Q_\XX \to 0\]
		and the Chern forms $c_k(\metr{F})$ (resp.\ $\metr{E}$, resp.\ $\metr{Q}$) are represented by the Chern classes $c_k(i^* \mscr F_\XX) \cap \_$ (resp.\ $\mscr E_\XX$, resp.\ $\mscr Q_\XX$) which are elements of the Chow group $\CH^k(\XX_0)$. But these do satisfy $c_\sbullet(i^*\mscr F_\XX) = c_\sbullet(i^*\mscr E_\XX) \, c_\sbullet(i^*\mscr Q_\XX)$ \parencite[theorem~2.3] {fulton--IntersectionTheory1998}, from which the lemma follows.
	\end{proof}
		
	\subsection{Divisors and the Poincaré-Lelong formula}
	\begin{defi}[Compare \protect{\cite[§3.1]{bloch_gillet_soule--NonArchimedeanArakelovTheory1995}}]\label{def:divnu}
		Let~$X$ be a proper smooth variety over $K$, of dimension~$n$. Let $L$ be a line bundle on~$X$, $\mscr L$ a metric~on $L$ and $s$ a regular section of~$L$ on~$X$. Given~$\XX$ a model of~$X$ compatible with the metric, we let $\t s$ be the extension of $s$ to a (possibly meromorphic) section of $\mscr L_\XX$, and we write 
		\begin{align*}
			\ddiv s \XX = \sum_V n_V [V] \in Z_n(\XX)
		\end{align*}
		for the divisor of~$\t s$. We also write
		\begin{align*}
			\ddiv[\nu] s \XX = \sum_{V \sbsq \XX_0} n_V [V] \in Z_n(\XX_0)
		\end{align*} for its vertical part, that is, the part supported on~$\XX_0$.
		
		In other words, we have $i_*\ddiv [\nu] s \XX = \ddiv s \XX - \closure[\XX]{\div s}$. 
		Here $\closure[\XX]{\placeholder}$ means the closure in $\XX$ of the corresponding cycle. 
		The collection $(\ddiv[\nu] s \XX)$ defines a current, which we denote by $\divnu s  \in \mcurr{0}(X)$.
	\end{defi}
	
	\noindent\textbf{Caution.} The current $\divnu{s}$ defined here is the opposite of the current $\div_\nu(s)$ defined in \textcite{bloch_gillet_soule--NonArchimedeanArakelovTheory1995}! 
	We chose  this convention here for the two following reasons: first, with this definition $\ddiv[\nu] s \XX$ is exactly the vertical part of the divisor of~$s$ on~$\XX$, as notation suggests. 
	Second, the current $\divnu s$ is then the analog in this setting of the current $[-\log \norm{s}^2]$ that appears in complex analysis, and the latter is arguably of greater significance than its opposite.
	
	\medskip 
	
	In \textcite[§3.1]{bloch_gillet_soule--NonArchimedeanArakelovTheory1995}, the Poincaré-Lelong formula is proven for a non-zero rational function. 
	We prove here the global version.
	
	\begin{prop}[Poincaré-Lelong formula]\label{prop:poincare_lelong}
		Let~$X$ be proper smooth, $L$ a line bundle on~$X$, $\mscr L$ a metric on~$L$ and $s$ a regular section of~$L$. 
		Then we have an equality of currents in $\ccurr{1}(X)$:
		\begin{align*}
			\ddc \divnu s + \delta_{\div s} = [c_1(\metr{L})].
		\end{align*}
	\end{prop}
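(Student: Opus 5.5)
It suffices to check the equality componentwise on a cofinal family of models, namely those $\XX$ compatible with the metric $\mscr L$. So fix such a model, let $i \colon \XX_0 \into \XX$ be the inclusion of the special fiber, and let $\t s$ be the extension of $s$ to a meromorphic section of $\mscr L_\XX$. By definition of $\ddc$, the component of the left-hand side on $\XX$ is
\[ i^* \eval{\XX}^{-1} i_* \ddiv[\nu] s \XX + i^*\eval{\XX}^{-1} \closure[\XX]{\div s}. \]
By definition~\ref{def:divnu}, $i_*\ddiv[\nu] s \XX = \ddiv s \XX - \closure[\XX]{\div s}$. Hence, by linearity of $\eval{\XX}^{-1}$ (Poincaré duality on the regular scheme $\XX$) and of $i^*$, the left-hand side becomes
\[ i^* \eval{\XX}^{-1} \bigl( \ddiv s \XX \bigr). \]
The divisor term cancels exactly, and it remains to identify this with $[c_1(\metr L)]_\XX = i^*\bigl(c_1(\mscr L_\XX)\cap \_\bigr)$.

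The key step is the identity in $\CH^1(\XX)$
\[ \eval{\XX}^{-1}\bigl[\div \t s\bigr] = c_1(\mscr L_\XX)\cap \_ , \]
that is, $c_1(\mscr L_\XX)\cap[\XX] = [\div \t s]$ in $\CH_n(\XX)$. This is the standard fact that the first Chern class of a line bundle, capped with the fundamental class of an integral scheme, is the class of the Weil divisor of any nonzero rational section \parencite[§2.5]{fulton--IntersectionTheory1998}, valid in the setting of §20.1 there. Since $\eval{\XX}$ is an isomorphism, the class $\eval{\XX}^{-1}[\div\t s]$ is then the bivariant class $c_1(\mscr L_\XX)$, and applying $i^*$ gives the claim.

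One then checks compatibility with the limit structure. The component of $[c_1(\metr L)]$ on a model $\pi\colon\XX'\to\XX$ is $\pi_0^*$ of the component on $\XX$. On the other hand, computing directly on $\XX'$ (which is again compatible, with $\mscr L_{\XX'}=\pi^*\mscr L_\XX$) yields $i'^*c_1(\pi^*\mscr L_\XX) = i'^* \pi^* c_1(\mscr L_\XX) = \pi_0^* i^* c_1(\mscr L_\XX)$. The two descriptions agree, so no further argument is needed and the componentwise identity on all compatible models proves the proposition.

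The main obstacle is in the key step. One must make sure that the extension $\t s$ is a genuine nonzero rational section of $\mscr L_\XX$ on the integral regular scheme $\XX$, which is where flatness and the regularity of the section $s$ are used. One must also make sure that $\ddiv s \XX$ is precisely its Weil divisor, so that both the vertical part and the closure of $\div s$ are correctly accounted for. Finally, Fulton's identification of $c_1\cap[\XX]$ with $[\div\t s]$ must be applied over $R$ with the absolute dimension convention.
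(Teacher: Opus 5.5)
Your proof is correct and is essentially the paper's: you restrict to models compatible with the metric, use $i_*\ddiv[\nu] s \XX = \ddiv s \XX - \closure[\XX]{\div s}$ to reduce the left-hand side to $i^*\eval{\XX}^{-1}\ddiv s \XX$, and conclude from $c_1(\mscr L_\XX)\cap[\XX] = \ddiv s \XX$ \parencite[§2.5]{fulton--IntersectionTheory1998}. Your extra check that the component of $[c_1(\metr L)]$ on a compatible model $\XX'$ above $\XX$ is $i'^*c_1(\mscr L_{\XX'})$ is a harmless verification that the paper leaves implicit.
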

	
	\begin{proof}
		It is enough to prove the equality of models of $X$ that are compatible with the metric: let~$\XX$ be such a model.
		By definition we have
		\begin{align*}
			\bigl( \ddc \divnu s + \delta_{\div s}\bigr)_\XX & = i^* \eval{\XX}^{-1}i_* \ddiv[\nu] s \XX + i^* \eval{\XX}^{-1} \closure[\XX]{\div s} \\ 
			& = i^* \eval{\XX}^{-1} \bigl( \ddiv s \XX - \closure[\XX]{\div s} + \closure[\XX]{\div s} \bigr) \\
			& = i^* \eval{\XX}^{-1} \ddiv s \XX \\ 
			\shortintertext{and} [c_1(\metr{L})]_\XX & = i^* (c_1(\mscr L_\XX) \cap \_).
		\end{align*}
		Therefore it is enough to show that \[\eval{\XX}^{-1} \ddiv s \XX = c_1(\mscr L_\XX) \cap \_\text \quad \text{ie} \quad c_1(\mscr L_\XX)\cap[\XX] = \ddiv s \XX\] which is in turn true by definition of the operator $c_1 (\mscr L_\XX)$ \parencite[§2.5]{fulton--IntersectionTheory1998}.
	\end{proof}
	
	\begin{rk}
		Let $p$ be a uniformizer of the discrete valuation ring $R$, and let $X$, $L$, $s$ and $\mscr L$ be as above. Then $s/p$ is another regular section of $L$ which defines the same cycle on~$X$, as $p$ is invertible in $K$. Furthermore, note that even though the currents $\divnu{s}$ and $\divnu{s/p}$ are not equal, their derivatives are, as $i_* \divnu{p} = \ddiv{p}{\XX} = 0$ in $\CH_\sbullet(\XX)$. Therefore, by multiplying the section~$s$ by an appropriate power of $p$, we can assume that the extension of $s$ on a compatible model $\XX$ is a global section of $\mscr L_\XX$, and not just a meromorphic one.
		
		This was not used in the proof above, but we will use this idea later when dealing with regular sections of vector bundles on~$X$.
	\end{rk}
	\section{Constructions of currents}\label{sec:construct}
	
	We present in this section two different constructions of currents in $\mcurr{0}$.
		
	First, we explain how, given two currents $S$ and $T$ in $\mcurr{0}(X)$, we can construct a new $(0,0)$-current $\inf(S,T)$. This construction is motivated by the need to define analogs of functions of the form $-\sum \log \norm{f_i}^2$, where the $f_i$'s are functions on a complex manifold, or sections of a line bundle.	
	Our general intuition on non-archimedean geometry tells us that the sum should be somehow replaced by a maximum; due to presence of the minus sign, this translates to a minimum in our setup.

	Secondly, we present some kind of gluing construction, and we explain how given an open covering of a model of $X$, and cycles on these open sets that are compatible between them and with respect to pushforward, we can put them together to form a current in $\mcurr{0}(X)$. 
	
	\medskip 
	
	Both of these constructions are not necessary for the statement and proof of theorem \ref{thm:bost-g-d-letter}, however they will be used in section \ref{sec:-log_sigma} to construct an explicit analog of the current $[-\log\norm{\sigma}^2]$ appearing in theorem \ref{thm:bost-g-s-complex}, where $\sigma$ is a section of a vector bundle.
	
	\subsection{Infimum of two \texorpdfstring{$(0,0)$-currents}{(0,0)-currents}} 
	
	Let~$\XX$ be a model of~$X$. There is a natural order on $Z_k(\XX_0)$, the set of $k$-cycles on $\XX_0$, given by $\sum n_V [V] \geq 0$ if and only if for all $V$, $n_V \geq 0$. With respect to this order, the infimum of two cycles $Z =  \sum_V n_V [V]$ and $Z' = \sum_V n'_V [V]$ exists, as it is given by 
	\[\inf(Z, Z') = \sum_V \min(n_V, n'_V) [V] \in Z_k(\XX_0).\]
	
	\begin{prop}\label{prop:min_courants}
		
		Let $\pi \colon \XX' \to \XX$ between models of $X$. Let $\alpha$ and $\beta$ be cycles in $Z_n(\XX'_0)$, where $n$ is the dimension of $X$. Then 
			\[\pi_{0*} \inf(\alpha, \beta) = \inf(\pi_{0*} \alpha, \pi_{0*}\beta).\]
	\end{prop}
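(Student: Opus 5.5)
The plan is to work component by component, using that $n$-cycles on $\XX'_0$ are combinations of irreducible components of the special fiber. Since $\XX'$ is flat over $R$ of absolute dimension $n+1$, the special fiber $\XX'_0$ is a Cartier divisor, so it has pure dimension $n$. Hence $Z_n(\XX'_0)$ is the free abelian group on the irreducible components $V$ of $\XX'_0$, and likewise for $\XX_0$. The proper pushforward $\pi_{0*}$ sends a component $V$ to $\deg(V/\pi(V))\,[\pi(V)]$ when $\dim \pi(V) = n$, and to $0$ otherwise.

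The key structural step is to show that $\pi_{0*}$ is a very simple map on these bases: distinct components of $\XX'_0$ mapping onto components do not collide, and each such map is birational. I would establish the following. Because $\pi$ is proper and birational (an isomorphism on generic fibers), it is an isomorphism over a dense open subset $W \sbsq \XX$. Since $\XX$ is regular, hence normal, Zariski's main theorem gives that $\pi$ is an isomorphism over the complement of a closed subset of codimension at least $2$ in $\XX$. That closed subset therefore contains no component $D$ of $\XX_0$, because such a $D$ has codimension $1$ in $\XX$.

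From this, for each component $D$ of $\XX_0$, $\pi$ is an isomorphism over a neighbourhood of the generic point of $D$. So there is exactly one component $V_D$ of $\XX'_0$ dominating $D$, namely its strict transform, and $\pi_{0*}[V_D] = [D]$ with degree $1$. Every other component of $\XX'_0$ maps to a subset of dimension less than $n$, so it is contracted and pushed forward to $0$.

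Granting this, write $\alpha = \sum_V a_V[V]$ and $\beta = \sum_V b_V[V]$. Then $\pi_{0*}\alpha = \sum_D a_{V_D}[D]$ and $\pi_{0*}\beta = \sum_D b_{V_D}[D]$. Both sides of the claimed equality equal $\sum_D \min(a_{V_D}, b_{V_D})[D]$, since the coefficients of the contracted components are simply discarded. This discarding is harmless for the infimum, because it acts coefficientwise.

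I expect the main obstacle to be the structural step, namely justifying that no component of $\XX_0$ is dominated by two components of $\XX'_0$ and that the degree is $1$. The cleanest route is the normality argument above. A local alternative is as follows: the generic point of $D$ has a local ring that is a DVR, and it equals the local ring of $\XX$ there. A proper birational morphism onto the spectrum of a DVR is an isomorphism, by the valuative criterion together with the fact that the DVR is a maximal proper birational domination. This also gives the result, and I would use it if the Zariski main theorem citation seems too heavy.
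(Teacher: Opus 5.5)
Your proposal is correct and follows essentially the same route as the paper. The paper proves the same key lemma, namely that each codimension-one integral subscheme of $\XX$ is dominated by exactly one codimension-one subscheme of $\XX'$, with degree $1$, because the non-isomorphism locus of $\pi$ has codimension at least $2$; it then splits each cycle into strict transforms plus contracted components and compares coefficientwise minima, and its only cosmetic difference is to first apply the injective map $i_*$ so as to work with divisors on $\XX$ rather than directly on the special fibres.
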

	
	Before the proof, we first need the classical lemma: 
	\begin{lm}
		Let~$\pi \colon \YY \to \XX$ be birational and proper between integral and regular $R$-schemes, and $V$ a codimension 1 closed integral subscheme of~$\XX$. 
		Then there exists a unique codimension 1 closed integral subscheme $W$ of~$\YY$ such that $\pi(W) = V$, and such that $W$ satisfies $[k(W) \colon k(V)] = 1$.
	\end{lm}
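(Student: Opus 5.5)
We use the local ring of $\XX$ at the generic point of $V$ together with the valuative criterion of properness. Let $\xi$ be the generic point of $V$. Since $\XX$ is regular and $V$ has codimension 1, the local ring $\OO_{\XX,\xi}$ is a discrete valuation ring. Its fraction field is the function field $k(\XX)$, which $\pi$ identifies with $k(\YY)$ by birationality, and its residue field is $k(V)$.

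\emph{Existence.} Apply the valuative criterion of properness to $\pi$ and the dominant map $\Spec k(\YY) \to \YY$. This gives a unique lift $\Spec \OO_{\XX,\xi} \to \YY$ over $\XX$. Let $\eta \in \YY$ be the image of the closed point, and let $W = \closure{\{\eta\}}$. Then $\OO_{\YY,\eta}$ is dominated by $\OO_{\XX,\xi}$ inside $k(\XX)$. Conversely, $\pi(\eta) = \xi$, so $\OO_{\XX,\xi} \sbsq \OO_{\YY,\eta}$ as local rings with $\OO_{\YY,\eta}$ dominating it.

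Since a discrete valuation ring is maximal for domination among local subrings of its fraction field, we get $\OO_{\YY,\eta} = \OO_{\XX,\xi}$. Hence $\OO_{\YY,\eta}$ has dimension 1, so $W$ has codimension 1 and is integral. Moreover $\pi(W) = \closure{\{\xi\}} = V$, because $\pi$ is closed. Finally $k(W) = k(V)$, since both are the residue field of the same local ring, so $[k(W):k(V)] = 1$.

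\emph{Uniqueness.} Let $W'$ be any codimension 1 integral closed subscheme with $\pi(W') = V$ and $[k(W'):k(V)] = 1$. We only need its generic point $\eta'$ to satisfy $\pi(\eta') = \xi$; in particular $k(W')/k(V)$ is then finite. Because $\YY$ is regular, $\OO_{\YY,\eta'}$ is a discrete valuation ring of $k(\YY) = k(\XX)$ dominating $\OO_{\XX,\xi}$. By the same maximality argument, $\OO_{\YY,\eta'} = \OO_{\XX,\xi} = \OO_{\YY,\eta}$.

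On an integral scheme, two points with the same local ring inside the function field coincide. To see this, take an affine open containing both points; the ring determines each point as the prime given by the contraction of the maximal ideal. Alternatively, the separatedness part of the valuative criterion gives uniqueness of the lift directly. Hence $\eta' = \eta$ and $W' = W$.

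The main point needing care is why $\pi(\eta') = \xi$ for the generic point of such a $W'$. Surjectivity of $W' \to V$ only gives that $\eta'$ maps to a point specializing to... in fact it forces $\pi(\eta') = \xi$: the image $\pi(W') = \closure{\{\pi(\eta')\}}$ equals $V$, so $\pi(\eta')$ is the generic point of $V$. The rest is the standard domination argument for discrete valuation rings.
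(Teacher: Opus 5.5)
Your proof is correct, but it takes a different route from the paper's. The paper quotes the general fact (Stacks, Tag 0BFP) that a proper birational morphism onto a regular scheme is an isomorphism over an open set $U$ whose complement has codimension at least $2$. The generic point $\xi$ of $V$ therefore lies in $U$, and existence, uniqueness and $k(W)=k(V)$ all follow at once from the isomorphism $\pi^{-1}(U)\to U$.

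You instead work only at $\xi$. The valuative criterion produces a point $\eta$ over $\xi$ with $\OO_{\YY,\eta}=\OO_{\XX,\xi}$. Maximality of valuation rings for domination then shows that every point of $\pi^{-1}(\xi)$ has this same local ring, hence equals $\eta$.

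What each route buys:
\begin{itemize}
\item Your argument is self-contained. It shows that only properness of $\pi$ and the fact that $\OO_{\XX,\xi}$ is a discrete valuation ring matter.
\item The regularity of $\YY$ that you invoke in the uniqueness step is superfluous, since any local subring of $k(\XX)$ dominating $\OO_{\XX,\xi}$ equals it.
\item The paper's citation is shorter, and it gives the stronger statement that $\pi$ is an isomorphism over a whole neighbourhood of $\xi$.
\end{itemize}

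One justification should be dropped. You propose choosing an affine open containing both $\eta$ and $\eta'$, but such an open need not exist in general. Moreover, without separatedness the claim that points with equal local rings coincide is false (think of the line with a doubled origin).

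Your second argument is the right one. Since $\OO_{\YY,\eta}=\OO_{\YY,\eta'}=\OO_{\XX,\xi}$, the canonical morphisms $\Spec \OO_{\YY,\eta}\to\YY$ and $\Spec \OO_{\YY,\eta'}\to\YY$ are two lifts over $\XX$ of the same valuative diagram. They agree because $\pi$ is separated, which gives $\eta=\eta'$.
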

	
	\begin{proof}[Proof (lemma)]
		Denote by~$U$ the biggest open subset of~$\XX$ such that the restriction $\pi \colon \pi^{-1}(U) \to U$ is an isomorphism. 
		Since $\pi$~is proper and birational, and $\XX$ is regular, the complement of $U$ has codimension at least~2 \parencite[\href{https://stacks.math.columbia.edu/tag/0BFP}{Tag 0BFP}]{stacks-project}.
		Now if~$V$ is a codimension 1 closed irreducible subscheme of~$\XX$, then its generic point~$\eta_V$ is in~$U$. 
		Since $\pi$ is an isomorphism over~$U$, the closure of the inverse image of~$\eta_V$ is the unique codimension 1 closed integral subscheme~$W$ above~$V$, and the equality $[k(W)\colon k(V)] = 1$ follows since $\pi$ induces an isomorphism of function fields.	
	\end{proof}
	\begin{proof}[Proof (proposition)]
		Let $\pi \colon \XX' \to \XX$, denote by~$i'$ and $i$ the inclusions of the respective special fibers. 
		Since $i_*$ is injective at the level of cycles, it is enough to prove 
		\begin{flalign*}
			&&i_* \pi_{0*} \inf(\alpha, \beta) & =  i_* \inf(\pi_{0*} \alpha, \pi_{0*}\beta), &&\\ 
			&\text{ie}& \pi_* \inf(i'_* \alpha, i'_* \beta) & = \inf(\pi_{*}i'_* \alpha, \pi_{*}i'_* \beta).&&
		\end{flalign*}
		
		Write $\pi_{*}i'_* \alpha = \sum_V a_V [V]$ and similarly for~$\beta$. 
		For all such~$V$, denote by~$W_V$ the codimension 1 subscheme of~$\XX'$ such that $\pi(W_V) = V$ and $[k(W_V):k(V)]=1$. Then $i'_*\alpha$ is of the form
		\[i'_*\alpha = \sum_V a_V [W_V] + \sum_Z a'_Z [Z]\] where the second sum is over varieties~$Z$ such that $\dim \pi(Z) < \dim Z$, and we have a similar decomposition for $i'_* \beta$.
		
		Then 
		\begin{align*}
			\pi_* \inf(i'_* \alpha, i'_* \beta) 
			& = \sum_V \min(a_V, b_V) \pi_* [W_V] + \sum_Z \min(a'_Z, b'_Z) \pi_* [Z] \\
			& = \sum_V \min(a_V, b_V)[V] \\ 
			& = \inf(\pi_{*}i'_* \alpha, \pi_{*}i'_* \beta)
		\end{align*}
		which concludes the proof.
	\end{proof}
	
	\begin{defi}\label{def:inf_courants}
		Denote by $n$ the dimension of $X$, and let $S$ and $T$ be currents in~$\mcurr{0}(X) = \lim \CH_n(\XX_0)$. As $\dim \XX_0 = n$, the quotient map $Z_n(\XX_0) \to \CH_n(\XX_0)$ is an isomorphism, therefore using proposition~\ref{prop:min_courants} we can define a new current $\inf(S,T)$ in $\mcurr{0}(X)$ by setting $\inf(S,T)_\XX = \inf(S_\XX, T_\XX)$.
	\end{defi}

	\begin{rk}
		The statement of proposition~\ref{prop:min_courants} and therefore the construction of definition~\ref{def:inf_courants} fail if the currents are of higher degree. 
		For a counter-example, take $X = \PP^1_K$, thus $n=1$. Let $\XX = \PP^1_R$, choose $x$ a closed point in the special fiber of $\XX$, and let $\XX'$ be the blow-up of $\XX$ at $x$. 
		Choose~$a$ and $b$ two distinct closed points in the exceptional divisor, let~$\alpha = [a]$ and~$\beta = [b]$, they are $0$-cycles.
		
		And 
		\begin{flalign*}
			&&\pi_{0*} \inf(\alpha, \beta) &= \pi_{0*} (0) = 0 &\\ 
			&\text{but}&\inf(\pi_{0*}\alpha, \pi_{0*}\beta) &= \inf([x], [x]) = [x].
		\end{flalign*}
	\end{rk}

	\begin{rk}
		A similar construction and proof allow us to define the supremum of two currents in~$\mcurr{0}(X)$. More generally, the proof of proposition~\ref{prop:min_courants} shows that we could define the infimum of any non-empty, bounded-below subset of~$\mcurr{0}(X)$, and similarly for the supremum. We will not use these constructions in the following.
	\end{rk}
	\subsection{Construction by gluing local data.}\label{sec:gluing}
	The spaces of differential forms and currents defined by \textcite{bloch_gillet_soule--NonArchimedeanArakelovTheory1995} do not have a sheaf structure. However, for the currents in $\mcurr{k}(X)$, it is possible to produce a construction rather similar to some kind of gluing construction. We state it here in the case $k=0$, as it is the only one we will need.
	
	\medskip
	
	We denote by $n$ the dimension of $X$, and we choose $\ZZZ$ a fixed model of $X$ as well as an open covering $\ZZZ = \bigcup_{a \in A} \mc U_a$. Suppose that for every model~$\rho \colon \XX \to \ZZZ$ above $\ZZZ$, and every $a\in A$, we are given an element
	\[\alpha_\XX^{(a)} \in Z_n(\XX_0 \cap \rho^{-1}(\mc U_a))\]  
	such that for all $a$ and $b$ in $A$: 
	\begin{equation}\label{eq:cycle_compat}
		(\alpha_\XX^{(a)})|_{\rho^{-1}(\mc U_{ab})} = (\alpha_\XX^{(b)})|_{\rho^{-1}(\mc U_{ab})}.
	\end{equation}
	
	Then for a fixed $\XX$ above $\ZZZ$, the collection of those cycles, indexed by $a$, glue to an~$n$-cycle~$\alpha_\XX \in Z_n(\XX_0)$.
	
	\begin{defi}\label{def:loc_compat_pushfw}
		A collection $(\alpha_\XX^{(a)})$ as above, satisfying (\ref{eq:cycle_compat}), is said to be \emph{locally compatible with pushforward} if for every map 
		\[\begin{tikzcd}[column sep=1em, row sep=2.5em]
			\XX' && \XX \\ 
			& \ZZZ &
			\arrow["\pi", from=1-1, to=1-3]
			\arrow["\rho", from=1-3, to=2-2]
			\arrow["\rho'"', from=1-1, to=2-2]
		\end{tikzcd}\]
		of models above $\ZZZ$, and for every $a\in A$, we have 
		\begin{equation}\label{eq:pushfwd_compat}
			(\pi_a)_{0*} \alpha_{\XX'}^{(a)} = \alpha_{\XX}^{(a)},
		\end{equation}
		where $\pi_a$ is the restriction $\rho'^{-1}(\mc U_a) \to \rho^{-1}(\mc U_a)$ of $\pi$.
	\end{defi}
	
	\begin{prop}\label{prop:courant_local}
		In the above situation, assume that the collection $(\alpha_\XX^{(a)})$ is locally compatible with pushforward. Then \[\pi_{0*} \alpha_{\XX'} = \alpha_\XX~;\]
		in other words, the collection $(\alpha_\XX)$ defines a current $\alpha \in \mcurr{0}(X)$.
		\medskip 
	\end{prop}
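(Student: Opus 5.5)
The equality $\pi_{0*} \alpha_{\XX'} = \alpha_\XX$ is an equality of $n$-cycles on $\XX_0$. A cycle on $\XX_0$ is determined by its restrictions to the members of an open covering, and here the open sets $\rho^{-1}(\mc U_a) \cap \XX_0$ cover $\XX_0$. So I will prove the equality after restricting to each $\rho^{-1}(\mc U_a)$ separately. Since $\dim \XX_0 = n$, the groups $Z_n(\XX_0)$ and $\CH_n(\XX_0)$ coincide, so no passage through rational equivalence is needed. Once the equality holds for every map $\pi$ of models above $\ZZZ$, the collection $(\alpha_\XX)$ is compatible with the transition maps $\pi_{0*}$ on the cofinal family of models above $\ZZZ$. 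That defines an element of $\lim \CH_n(\XX_0) = \mcurr{0}(X)$, and we can extend it to all models by pushing forward from a model above both.

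The key step is that proper pushforward of cycles commutes with restriction to an open subset. Fix $a \in A$ and set $\mc V = \rho^{-1}(\mc U_a) \subseteq \XX$. Since $\rho' = \rho \circ \pi$, we have $\pi^{-1}(\mc V) = \rho'^{-1}(\mc U_a)$, so the square formed by $\pi_a \colon \pi^{-1}(\mc V) \to \mc V$, the two open immersions, and $\pi$ is cartesian, and likewise on special fibers. Because $\pi_0$ is proper, flat pullback along open immersions commutes with proper pushforward \parencite[proposition 1.7]{fulton--IntersectionTheory1998}. This gives
\[(\pi_{0*} \alpha_{\XX'})|_{\mc V} = (\pi_a)_{0*}\bigl(\alpha_{\XX'}|_{\pi^{-1}(\mc V)}\bigr).\]
The same fact can also be checked by hand on prime cycles. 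For an integral $n$-dimensional subvariety $W$ of $\XX'_0$, the cycle $\pi_{0*}[W]$ is $[k(W):k(\pi(W))][\pi(W)]$ or $0$. Restricting to $\mc V$ either kills both sides, when $\pi(W) \cap \mc V = \emptyset$, which is equivalent to $W \cap \pi^{-1}(\mc V) = \emptyset$. Otherwise it preserves the function fields and hence the degree.

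With this in hand, the argument follows directly. By construction of the glued cycle, $\alpha_{\XX'}|_{\pi^{-1}(\mc V)} = \alpha_{\XX'}^{(a)}$. Hence
\[(\pi_{0*} \alpha_{\XX'})|_{\mc V} = (\pi_a)_{0*} \alpha^{(a)}_{\XX'} = \alpha^{(a)}_\XX = \alpha_\XX|_{\mc V},\]
where the middle equality is hypothesis (\ref{eq:pushfwd_compat}). Since this holds for all $a$, the two cycles agree on $\XX_0$.

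I don't expect any real obstacle. The only point requiring care is the identification $\pi^{-1}(\rho^{-1}(\mc U_a)) = \rho'^{-1}(\mc U_a)$ and the compatibility of pushforward with open restriction just described. A minor bookkeeping point is that the limit is indexed by all models, not just those above $\ZZZ$. I handle this through cofinality, using the hypothesis of section~\ref{sec:models}: for an arbitrary model $\mc Y$, define the component as $\phi_{0*}\alpha_\XX$ for some $\XX$ above both $\ZZZ$ and $\mc Y$ via $\phi\colon \XX \to \mc Y$. Independence of the choice of $\XX$ follows from the equality just proved, by comparing two choices $\XX_1$, $\XX_2$ through a common model $\XX_3$ above both.
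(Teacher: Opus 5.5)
Your proposal is correct and follows essentially the same route as the paper. You restrict to each $\rho^{-1}(\mc U_a)$, commute $\pi_{0*}$ with this open restriction using Fulton's Proposition~1.7, apply hypothesis~(\ref{eq:pushfwd_compat}), and conclude because the sets $\rho^{-1}(\mc U_a)\cap\XX_0$ cover $\XX_0$. Your additional cofinality argument, which extends the compatible system from models above $\ZZZ$ to all models, is a correct detail that the paper leaves implicit.
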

	\begin{rk}
		The current $\alpha$ defined in this way is independent of the choices made: given another model $\ZZZ'$ with covering $\ZZZ' = \bigcup_{i \in I} \mc V_i$, and another collection $\beta_\XX^{(i)}$ that is locally compatible with pushforward, if 
		\begin{equation}\label{eq:uniqueness}
			\alpha_\XX^{(a)}|_{\rho'^{-1}(\mc V_i)} = \beta_{\XX}^{(i)}|_{\rho^{-1}(\mc U_a)}
		\end{equation}
		is true for every model $\XX$ above both $\ZZZ$ and $\ZZZ'$, and every $a \in A$, $i \in I$, then the currents $\alpha$ and $\beta$ are equal.
		
		Indeed, the set of models $\XX$ above both $\ZZZ$ and $\ZZZ'$ is cofinal, and on such models, $\alpha_\XX = \beta_\XX$ immediately follows from~(\ref{eq:uniqueness}) and the fact that the open sets $\rho^{-1}(\mc U_a) \cap \rho'^{-1}(\mc V_i)$ cover $\XX$.
	\end{rk}
	\begin{proof}[Proof of proposition \ref{prop:courant_local}]
		Let $\iota_a$ (resp.\ $\iota'_a$) be the inclusion~$\rho^{-1}(\mc U_a) \to \XX$ (resp.\ $\rho'^{-1}(\mc U_a) \to \XX'$).
		Then~(\ref{eq:pushfwd_compat}) reads
		\[(\pi_a)_{0*} (\iota'_a)_0^* \alpha_{\XX'} = (\iota_a)_0^* \alpha_\XX \]
		and the first term may be rewritten as $(\iota_a)_0^* \pi_{0*} \alpha_{\XX'}$ \parencite[proposition~1.7] {fulton--IntersectionTheory1998}, so that for all $a \in A$, 
		\begin{align*}
			(\iota_a)_0^* \left(\pi_{0*} \alpha_{\XX'} - \alpha_\XX \right) & = 0
			\shortintertext{that is}
			\left(\pi_{0*} \alpha_{\XX'} - \alpha_\XX \right)|_{\rho^{-1}(\mc U_a)\cap \XX_0} & = 0.
		\end{align*}
		
		Since cycles are involved, and since the $\rho^{-1}(\mc U_a)\cap \XX_0$'s cover $\XX_0$, it follows that $\pi_{0*} \alpha_{\XX'} = \alpha_\XX$, as we wanted to show.
	\end{proof}

		\begin{rk}
		Because of the canonical isomorphism between $Z_n(\XX_0)$ and $\CH_n(\XX_0)$, this proposition looks quite similar to the statement that $\mcurr{0}(X)$ is a sheaf. However this interpretation is not exactly true, as we had to take the open cover on a model $\ZZZ$ and not directly on $X$. 
		
		Actually, given a current $\alpha \in \mcurr{0}(X)$ and an open set $U \sbsq X$, we see no way of defining the restriction $\alpha|_U$ in a natural way.

		Note also that a similar statement holds, using the same proof, if we instead choose $\alpha_\XX^{(a)}$ to be a $(n-k)$-cycle. The current $\alpha$ thus obtained  obtained is then an element of $\mcurr{k}(X)$. But since $Z_k(\XX_0) \to \CH_k(\XX_0)$ is not an isomorphism anymore when $k \geq 1$, the interpretation in terms of ``sheaf gluing'' strays even further. We will only use the case $k=0$ in the following.
	
	\end{rk}
	\section{Bost-Gillet-Soulé formula: statement and proof}
	
	We prove in this section the analog of \textcite[§1.2.3]{bost_gillet_soule--HeightsProjectiveVarietiesPositive1994} in our non-archimedean setup. We will use the following convention: given $F$ a vector bundle on $X$, we denote by~$\PP(F) = \Proj (\Sym F^\vee)$ its projectivization and by~$\OO_F(1)$ the twisting sheaf on $\PP(F)$.
	
	\subsection{Setup and statement} 
	Let $X$ be a proper smooth variety over $K$, of pure dimension $n$, and $Z$ a proper closed subset of $X$. We will assume until the end that there exists a vector bundle $F$ of rank $r$ on $X$, and a global section $\sigma$ of $F$, which is \emph{regular}, and such that $Z$ is the zero-set of $\sigma$. Note that by the regularity hypothesis, $Z$ has codimension $r$ everywhere in $X$.
	
	Let $\mc I$ be the ideal sheaf in $\OO_X$ defining $Z$, $\pi \colon \t X \to X$ the blow-up of $X$ along $Z$, and $E$ the exceptional divisor.
	
	The global section $\sigma$ induces a surjective morphism $F^\vee \onto \mc I$, hence a surjection
	\[\Sym (F^\vee) \onto \Sym(\mc I) = \bigoplus_k \mc I^k\] 
	and thus a closed immersion $f \colon \t X \into \PP(F)$. This immersion satisfies
	\begin{equation}\label{eq:O_F(1)}
		f^*\OO_F(1) = \mc I \proddot \OO_{\t X} = \OO_{\t X}(-E).
	\end{equation}
	
	Now let us choose a metric $\mscr F$ on $F$, in the sense of definition~\ref{def:metric}. Then using definition~\ref{def:induced_metrics} we endow $\OO_F(-1)$ and $Q_F := p^*F / \OO_F(-1)$ with the induced metric and the quotient metric, respectively. In view of~(\ref{eq:O_F(1)}), we also endow~$\OO_{\t X}(E)$ with a metric by pulling back the metric from $\OO_F(-1)$.
	
	\begin{thm}\label{thm:bost-g-s}
		In the situation as above, denote by $s$ the canonical section of~$\OO_{\t X}(E)$.
		
		We define a current $\Lambda \in \mcurr{r-1}(X)$ by 
		\[\Lambda = \pi_* (\divnu{s} \wedge f^* c_{r-1}(\metr{Q_F})).\]
		Then we have the following equality of currents:
		\[\ddc \Lambda + \delta_Z = [c_r(\metr{F})].\]In particular $\Lambda$ is a Green current for $Z$.
	\end{thm}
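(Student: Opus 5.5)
The plan is to compute $\ddc \Lambda$ by moving $\ddc$ inside the pushforward and then applying the Poincaré--Lelong formula on $\t X$.

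\textbf{Step 1: pass $\ddc$ through $\pi_*$.} By lemma~\ref{lm:ddc_pushforward},
\[\ddc \Lambda = \pi_*\bigl(\ddc(\divnu{s} \wedge f^*c_{r-1}(\metr{Q_F}))\bigr).\]

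\textbf{Step 2: a Leibniz rule for a closed form.} I need the identity $\ddc(\om \wedge T) = \om \wedge \ddc T$ for $\om \in \cform{}$ and $T \in \mcurr{}$. This is not among the lemmas stated so far, so I would prove it directly on a model $\XX$ where $\om$ is represented by $c \in \CH^\sbullet(\XX_0)$.

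The point is to lift $c$ to a class on $\XX$. Since $f^*c_{r-1}(\metr{Q_F})$ is a Chern form, on a compatible model it is $j^*$ of the bivariant class $c_{r-1}(\mscr Q) \cap \_$ on $\XX$. For a class of the form $i^*C$ with $C\in\CH^\sbullet(\XX)$, I would then check
\[i^*\eval{\XX}^{-1} i_*\bigl((i^*C)(\alpha)\bigr) = i^*\eval{\XX}^{-1}\bigl(C(i_*\alpha)\bigr) = i^*\bigl(C\cdot \eval{\XX}^{-1}i_*\alpha\bigr) = i^*C\cdot i^*\eval{\XX}^{-1}i_*\alpha.\]
This uses (C\textsubscript{1})/(A\textsubscript{12}) for the first equality, commutativity of $\CH^\sbullet(\XX)$ together with $\eval{\XX}(C\cdot D) = C(\eval{\XX}D)$ for the second, and (A\textsubscript{13}) for the last. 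Only forms that lift to $\XX$ are needed here, so this restricted version suffices.

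\textbf{Step 3: Poincaré--Lelong on $\t X$.} Applying proposition~\ref{prop:poincare_lelong} to $s$, whose divisor is $E$ and whose metric is pulled back from $\OO_F(-1)$, gives
\[\ddc\divnu{s} = [c_1(\OO_{\t X}(E))] - \delta_E,\]
where $c_1(\OO_{\t X}(E)) = f^*c_1(\metr{\OO_F(-1)})$. Combining with Step 2 and lemma~\ref{lm:compat_inclusion_wedge},
\[\ddc\Lambda = \pi_*\bigl[f^*(c_1(\metr{\OO_F(-1)})\wedge c_{r-1}(\metr{Q_F}))\bigr] - \pi_*\bigl(f^*c_{r-1}(\metr{Q_F})\wedge\delta_E\bigr).\]

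\textbf{Step 4: the smooth term.} Lemma~\ref{lm:exact sq} applied to $0\to\OO_F(-1)\to p^*F\to Q_F\to 0$ gives
\[c_r(p^*\metr F) = c_1(\metr{\OO_F(-1)})\wedge c_{r-1}(\metr{Q_F}),\]
since $c_r(\metr{Q_F}) = 0$ because $Q_F$ has rank $r-1$. Two checks are needed here. First, $c_r(Q)$ should vanish on models: $\mscr Q$ is locally free of rank $r-1$ there, so this holds. Second, the pullback of the metric on $p^*F$ should be the pullback metric. Since $p\circ f = \pi$, the first term becomes $\pi_*[\pi^*c_r(\metr F)]$. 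By proposition~\ref{prop:projection_formula} with $T=[1]$ this equals $c_r(\metr F)\wedge \pi_*[1]$. Finally $\pi_*[1] = [1]$, because $\pi$ is birational: on models, $\pi_{0!}(1) = \pi_{0*}[\pi_0]$ corresponds to the degree-one pushforward of the fundamental class of $\XX'$ to $\XX$ via lemma~\ref{lm:bgs1}.

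\textbf{Step 5: the exceptional term (main obstacle).} It remains to show
\[\pi_*\bigl(f^*c_{r-1}(\metr{Q_F})\wedge\delta_E\bigr) = \delta_Z.\]
Classically, $E = \PP(N_Z)$, $Q_F|_E$ restricts to the universal quotient of $N = F|_Z$, and $\pi_*(c_{r-1}(Q)\cap[E]) = [Z]$.

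On a model $\t\XX$ over $\XX$, I would compute
\[\pi_{0!}\bigl(j^*c_{r-1}(\mscr Q)\cdot j^*\eval{}^{-1}\closure{E}\bigr) = i^*\pi_!\bigl(c_{r-1}(\mscr Q)\cdot \eval{}^{-1}\closure E\bigr),\]
using lemma~\ref{lm:bgs1}. By the second diagram of that lemma, it then suffices to prove the cycle-level identity
\[\pi_*\bigl(c_{r-1}(\mscr Q)\cap[\closure E]\bigr) = [\closure Z] \quad\text{in } \CH_\sbullet(\XX).\]
For this I would choose models so that $\closure E$ maps to $\closure Z$ and $\mscr Q|_{\closure E}$ is a quotient of the pullback of $\mscr F$. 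The class $c_{r-1}(\mscr Q)\cap[\closure E]$ pushed to $\closure Z$ is then of the correct dimension, hence an integer multiple of $[\closure Z]$, plus possibly vertical components. Two things must be controlled here:

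\begin{itemize}
\item The generic multiplicity should be $1$; this follows from the generic-fiber computation on $\PP(N_Z)$.
\item Any extra terms supported on $\XX_0$ must be killed. I would handle these with Fulton's degree-of-Segre-class argument: $c_{r-1}(Q)$ on $\PP(N)$ pushes to $s_0(N)=1$ via $c(Q)=c(N)/c(\OO(-1))$. This holds on all of $\closure E$ precisely when $\closure E$ is a projective bundle of dimension $\dim\closure Z + r-1$ over $\closure Z$.
\end{itemize}

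Ensuring this dimension condition on the models, or replacing it by a dimension-count argument showing that the vertical contributions vanish, is the step I expect to be hardest.
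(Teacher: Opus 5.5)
Steps 1--4 are sound and follow the paper's argument. Your explicit verification of $\ddc(\om\wedge T)=\om\wedge\ddc T$ for Chern forms, and of $\pi_*[1]=[1]$, fills in points the paper uses without comment.

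\textbf{The gap is Step 5}, which carries the whole content of the exceptional term. You reduce correctly to the identity $\pi_*\bigl(c_{r-1}(\mscr Q)\cap[\closure{E}]\bigr)=[\closure{Z}]$ in $\CH_\sbullet(\XX)$ (with $\pi$ the map of models). But you leave it unproved, and the obstruction you anticipate does not exist. With the paper's absolute dimensions, $\dim\closure{E}=n$ and $\dim\closure{Z}=n-r+1$. So $c_{r-1}(\mscr Q)\cap[\closure{E}]$ has dimension $n-r+1$, and since $\pi(\closure{E})=\closure{Z}$, its pushforward lies in $\CH_{n-r+1}(\closure{Z})$. This is a top-dimensional Chow group, hence equal to $Z_{n-r+1}(\closure{Z})$, which is freely generated by the closures of the components of $Z$. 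A vertical subvariety of $\closure{Z}$ has dimension at most $n-r$, so ``vertical components'' cannot occur in this degree. The coefficients are then detected by restriction to the open generic fiber, which commutes with proper pushforward and with Chern classes (Fulton, Prop.~1.7 and Prop.~3.2). There, regularity of $\sigma$ identifies $f(E)$ with $p^{-1}(Z)=\PP(F|_Z)$, and the classical Segre-class computation gives $[Z]$. With this observation your route closes on any model of $\t X$ over $\XX$ compatible with the metrics, and no projective-bundle structure on $\closure{E}$ is needed.

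The paper instead follows your first alternative. It works on the specific model $\t\XX=\Bl_{\mc J}\XX$, where $\mc J$ is locally generated by the coordinates $f_1,\dots,f_r$ of the extended section; this model embeds via $g$ into $\PP(\mscr F_\XX)$. It then proves (lemma~\ref{lm:closure}) that $g_*[\closure{E}]=q^*[\closure{Z}]$, i.e.\ $\closure{E}$ is scheme-theoretically the full projective bundle over $\closure{Z}$. The input is again regularity on the generic fiber: the kernel of $\Sym(F^\vee)\to\bigoplus_k I_Z^k$ is generated by the Koszul relations $x_if_j-x_jf_i$, which lie in $I_Z[x_1,\dots,x_r]$. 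Hence a polynomial mapping into the ideal of $\closure{E}$ already has coefficients in the ideal of $\closure{Z}$. The paper then expands $c_{r-1}(\metr{Q_F})$ in powers of $c_1(\metr{\OO_F(1)})$, uses the projection formula, and computes $q_*\bigl(c_1(\OO_{\mscr F_\XX}(1))^k\cap q^*[\closure{Z}]\bigr)=s_{k-r+1}(\mscr F_\XX)\cap[\closure{Z}]$. This is exactly your Segre-class argument carried out on the model. The paper's lemma gives a stronger, scheme-theoretic description of $\closure{E}$. The dimension count is shorter and does not depend on the geometry of $\closure{E}$ on a particular model; in particular it does not require $\Bl_{\mc J}\XX$ to be regular.
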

	
	\subsection{Blow-up and models}\label{sec:blow-up_models}
	Before proving the theorem in the next part, we will first explain in this part how to construct models $\t \XX$ of $\t X$ that will suit our needs. 
	
	Let us fix a model $\XX$ of $X$ compatible with the metric. Possibly after multiplying $\sigma$ by an appropriate power of the uniformizer of $R$, the section $\sigma$ extends to a global section $\t \sigma$ of $\mscr F_\XX$ (see the remark after proposition~\ref{prop:poincare_lelong}), and doing so is harmless as it does not change the closed subset $Z$ of~$X$. 
	
	Choose an open covering $\XX = \bigcup_{a \in A} \mc U_a$ which trivializes $\mscr F_\XX$. For every $a$ in $A$, let~$(\eps^{(a)}_k)_{1 \leq k \leq r}$ be a local frame of~$\mscr F_\XX$ over~$\mc U_a$, then the extension $\t \sigma$ decomposes locally as $\t \sigma |_{\mc U_a} = \sum f_k^{(a)} \eps_k^{(a)}$, with $f_k^{(a)} \in \OO_\XX(\mc U_a)$.

	\begin{defi}\label{def:model_blowup}
		Let $\mc J$ be the sheaf ideal in $\OO_\XX$ locally defined on $\mc U_a$ by
		\[\mc J_{|\mc U_a} = (f_1^{(a)}, \dots, f_r^{(a)}).\]
		
		We define $\t \XX = \Bl_{\mc J} \XX$, and we write $u$ for the blow-up map.
	\end{defi}
	
	One first thing to note is that $\t \XX$ is a model of $\t X$. Indeed, by definition, the ideal $\mc I_Z$ of $Z$ inside $X$ is locally defined by the images of the $f_i^{(a)}$ under the map $\OO_\XX \mapsto \OO_\XX \otimes_R K \cong \OO_X$, therefore $\t \XX \otimes_R K \cong \t X$.
	
	Furthermore, as was the case on $X$, the section $\t \sigma$ on $\XX$ induces a surjection $\Sym(\mscr F_\XX^\vee) \onto \bigoplus \mc J^n$, and therefore a closed immersion $g \colon \t \XX \into \PP(\mscr F_\XX)$. We write~$q$ for the projection $\PP(\mscr F_\XX) \to \XX$, and~$i$, $j$ and $j'$ for the respective inclusion of special fibers in $\XX$, $\t\XX$ and $\PP(\mscr F_\XX)$. 
	
	We summarize the notation about these models in the following commutative diagrams (the map $q_0$ is omitted for readability reasons only):
	\begin{equation}\label{eq:diagram}
		\begin{tikzcd}
	& {\PP(F)} &&& {\PP(\mscr F_\XX)}_0 && {\PP(\mscr F_\XX)} \\
	{\t X} &&& {\t \XX_0} && {\t\XX} \\
	& X &&& {\XX_0} && \XX \\
	\arrow["i", hook, from=3-5, to=3-7]
	\arrow["j", hook, from=2-4, to=2-6]
	\arrow["j'", hook, from=1-5, to=1-7]
	\arrow["\pi", from=2-1, to=3-2]
	\arrow["f"', hook, from=2-1, to=1-2]
	\arrow["p", from=1-2, to=3-2]
	\arrow["u", from=2-6, to=3-7]
	\arrow["g"', hook, from=2-6, to=1-7]
	\arrow["q", from=1-7, to=3-7]
	\arrow["u_0", from=2-4, to=3-5]
	\arrow["g_0"', hook, from=2-4, to=1-5]
	\end{tikzcd}
	\end{equation}

	\begin{rk}
		Even though the sequence defining $\sigma$ on $X$ is regular, there is no reason for the sequence $(f_i^{(a)})_{1 \leq i \leq r}$ to be regular on the model $\XX$. 
	
	Furthermore, note that we only have the inclusion $\mc J \sbsq \mc I_{\closure{Z}} := u^{-1}(\mc I_Z)$, and in general the closed subset defined by $\mc J$ will be bigger than $\closure{Z}$, as it may have other irreducible components in the special fiber.
	\end{rk}
	
	\subsection{Proof of theorem~\ref{thm:bost-g-s}}
	We now move on to the proof of our analog of Bost-Gillet-Soulé formula. First, using lemma~\ref{lm:ddc_pushforward} and the Poincaré-Lelong equation (proposition~\ref{prop:poincare_lelong}) we obtain
		\begin{align}
			\ddc \Lambda & = \ddc \pi_* \left( \divnu{s} \wedge f^* c_{r-1}(\metr{Q_F}) \right)  
			 = \pi_* \left( (\ddc \divnu{s})\wedge f^*c_{r-1}(\metr{Q_F})\right) \nonumber \\ 
			& = \pi_* \left( ([c_1(\metr{\OO_{\t X}(E)})] - \delta_E)\wedge f^*c_{r-1}(\metr{Q_F})\right). \label{eq:thm0}
		\end{align}
		
		First, recall from~(\ref{eq:O_F(1)}) that $\OO_{\t X}(E) = f^* \OO_F(-1)$, and that the metric on~$\OO_{\t X}(E)$ was defined to be the pullback of the metric on $\OO_F(-1)$. Therefore using lemma~\ref{lm:compat_inclusion_wedge}:
		
		\begin{align*}
			[c_1(\metr{\OO_{\t X}(E)})] \wedge f^*c_{r-1}(\metr{Q_F}) & = [f^*c_1(\metr{\OO_F(-1)}) \wedge f^* c_{r-1}(\metr{Q_F})].
		\end{align*}
		Using lemma~\ref{lm:exact sq} applied to the exact sequence 
		\begin{align}\label{eq:thm1}
			0 \to f^* \OO_F(-1) \to \pi^* F \to f^* &Q_F \to 0, \nonumber \\
			\shortintertext{we get}
			[c_1(\metr{\OO_{\t X}(E)})] \wedge f^*c_{r-1}(\metr{Q_F}) & = [\pi^* c_r(\metr{F})] \nonumber \\
			\shortintertext{and thus}
			\pi_* \left( [c_1(\metr{\OO_{\t X}(E)})] \wedge f^*c_{r-1}(\metr{Q_F}) \right)& = [c_r(\metr{F})]
		\end{align}
		by the projection formula~\ref{prop:projection_formula}.

		\medskip 
		
		Now we compute the second term $\pi_*(\delta_E \wedge f^*c_{r-1}(\metr{Q_F}))$ appearing in~(\ref{eq:thm0}). We start by noting that using lemma~\ref{lm:exact sq}, we have 
		\[c_{\sbullet}(\metr{Q_F}) = c_\sbullet(p^* \metr{F}) \proddot (1 - c_1(\metr{\OO_F(1)}))^{-1} = c_\sbullet(p^* \metr{F}) \sum_{k \geq 0} c_1(\metr{\OO_F(1)})^k.\]This implies the following decomposition of the Chern forms of $Q_F$: for all $l \geq 0$,
		\begin{equation}\label{eq:c_l(Q)}
		\ c_l(\metr{Q_F}) = \sum_{k=0}^l c_{l-k}(p^*\metr{F})\wedge c_1(\metr{\OO_F(1)})^k.
		\end{equation}
		
		Applying with $l=r-1$ gives
		\begin{align}\label{eq:thm2}
			\pi_*(\delta_E \wedge f^*c_{r-1}(\metr{Q_F}) ) 
			& = \pi_* \left( \delta_E \wedge \sum_{k=0}^{r-1} \pi^* c_{r-1-k}(\metr{F}) \wedge f^*c_1(\metr{\OO_F(1)})^k \right) \nonumber\\ 
			& = \sum_{k=0}^{r-1} c_{r-1-k}(\metr{F}) \wedge \pi_* \left( \delta_E \wedge f^*c_1(\metr{\OO_F(1)})^k \right)
		\end{align}
		by the projection formula. Let $0 \leq k \leq r-1$, and $\XX$ a model of $X$. We will compute $\pi_*(\delta_E \wedge f^*c_1(\metr{\OO_F(1)})^k)$ on models, using the model $\t \XX$ of $\t X$ defined in~\ref{def:model_blowup}, and using notation as in diagram~(\ref{eq:diagram}).
		\begin{align*}
			\Big( \pi_*\big(\delta_E \wedge f^*c_1(\metr{\OO_F(1)})^k\big)\Big)_\XX
			& = u_{0!}\Big(j^* \eval{\t \XX}^{-1}(\closure{E}) \proddot g_0^* c_1\big((j')^* \OO_{\mscr F_\XX}(1)\big)^k\Big) \\ 
			& = i^* u_! \Big( \eval{\t \XX}^{-1}(\closure{E}) \proddot g^* c_1\big( \OO_{\mscr F_\XX}(1))^k\big)\Big)
		\end{align*}
		by \textcite[(A\textsubscript{13}) p. 323] {fulton--IntersectionTheory1998} and lemma~\ref{lm:bgs1}. And using commutativity of bivariant Chow rings (§\ref{sec:chow}), as well as lemmas~\ref{lm:proj_chow} and \ref{lm:bgs1}, we obtain
		\begin{align*}
			\Big( \pi_*\big(\delta_E \wedge f^*c_1(\metr{\OO_F(1)})^k\big)\Big)_\XX
			& = i^* q_! g_! \Big( g^* c_1\big( \OO_{\mscr F_\XX}(1)\big)^k \proddot \eval{\t \XX}^{-1}(\closure{E}) \Big)\\ 
			& = i^* q_! \Big( c_1\big( \OO_{\mscr F_\XX}(1)\big)^k \proddot \eval{\PP(\mscr F_\XX)}^{-1}(g_* \closure{E})\Big).
		\end{align*}

	We now make use of the following lemma, whose proof crucially uses the hypothesis of regularity of the section $\sigma$:
	\begin{lm}\label{lm:closure}
		In the above situation, we have $g_* \closure{E} = q^* \closure{Z}$.
	\end{lm}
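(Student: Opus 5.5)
We have to show that $g_*\closure{E}$ and $q^*\closure{Z}$ agree as cycles on $\PP(\mscr F_\XX)$, where $q^*$ is flat pullback along the projective bundle. Here $\closure{E}$ is the closure in $\t\XX$ of the exceptional divisor $E \sbsq \t X$, and $\closure{Z}$ is the closure of $Z$ in $\XX$.

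\emph{Reduction to the generic fiber.} Both cycles are closures of cycles on the generic fiber. For $g_*\closure{E}$ this holds because $g$ is a closed immersion, so $g_*\closure{E}$ is the closure of $f_*E$ in $\PP(\mscr F_\XX)$. For $q^*\closure{Z}$ this holds because $q$ is flat with irreducible fibers. Indeed, write $\closure{Z} = \sum_V m_V[\closure V]$ over the components $V$ of $Z$ (with multiplicities from the scheme structure). Then $q^{-1}(\closure V)$ is a projective bundle over the integral scheme $\closure V$, hence integral. It is flat over $R$, since $\closure V$ is, so it is the closure of its generic fiber $p^{-1}(V)$; the same argument applies to multiplicities computed at generic points. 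Hence $q^*\closure{Z} = \closure{p^*Z}$, and it suffices to prove $f_*E = p^*Z$ as cycles on $\PP(F)$.

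\emph{Generic fiber computation, using regularity of $\sigma$.} Since $\sigma$ is regular, $\mc I$ is locally generated by a regular sequence, so $\mc I/\mc I^2$ is locally free of rank $r$. Moreover the surjection $F^\vee|_Z \onto \mc I/\mc I^2$ is an isomorphism, since both sides are locally free of rank $r$. Regularity also gives $\Sym(\mc I/\mc I^2) \cong \bigoplus \mc I^k/\mc I^{k+1}$. Therefore the exceptional divisor is
\[E = \Proj\Bigl(\bigoplus_k \mc I^k/\mc I^{k+1}\Bigr) = \PP(N_{Z/X}) \cong \PP(F|_Z) = p^{-1}(Z)\]
as schemes, and $f$ restricted to $E$ is this identification. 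So $f(E) = p^{-1}(Z)$ scheme-theoretically, including nilpotent structure when $Z$ is nonreduced. It follows that $f_*[E] = [p^{-1}(Z)] = p^*[Z]$, where the last equality is flat pullback of the fundamental cycle of a subscheme (Fulton, Lemma 1.7.1).

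\emph{Main obstacle.} The delicate point is the first step: justifying that closure commutes with flat pullback and pushforward at the level of cycles with multiplicities. Irreducible components of $Z$ may carry multiplicity, and we must make sure $q^*$ of the closure introduces no vertical components. I would handle this by noting that every component of $q^{-1}(\closure V)$ dominates $\Spec R$. This holds because $q$ is flat and surjective with geometrically irreducible fibers, so the generic point of $q^{-1}(\closure V)$ lies over the generic point of $\closure V$, which lies in the generic fiber. The multiplicity at that generic point then equals the multiplicity computed on $\PP(F)$, and this is exactly what is needed.
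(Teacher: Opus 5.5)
Your proof is correct, but it reaches the model by a different route than the paper.

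\textbf{The paper's route.} The paper proves a stronger, scheme-theoretic statement: $g$ maps the closure of $E$ isomorphically onto $q^{-1}(\closure{Z})$. It works locally on $\XX=\Spec A$ and defines the closure ideals as $I_{\closure{Z}}=w^{-1}(I_Z)$ and $I_{\closure{E}}=w^{-1}(I_E)$, where $w\colon A\to A\otimes K$. It then checks directly that $\psi$ induces an isomorphism $(A/I_{\closure{Z}})[x_1,\dots,x_r]\to \OO_{\t{\XX}}/I_{\closure{E}}$. The generic-fiber input is that $I_E=I_Z\cdot\OO_{\t X}$, and that $\ker\phi$ is generated by the $x_iw(f_j)-x_jw(f_i)$, which lie in $I_Z[x_1,\dots,x_r]$.

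\textbf{Your route.} You prove the scheme-theoretic identity $f(E)=p^{-1}(Z)$ only on the generic fiber. You do this via $\mc I/\mc I^2\cong F^\vee|_Z$ and $\Sym(\mc I/\mc I^2)\cong\bigoplus_k\mc I^k/\mc I^{k+1}$, which is the same regularity input as the paper's, repackaged. You then transport the identity to the model at the level of cycles. Closure of cycles commutes with pushforward along the closed immersion $g$. It also commutes with flat pullback along $q$, because $q^{-1}(\closure{V})$ is integral and dominates $\Spec R$, so no vertical components appear.

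\textbf{What each buys.} The paper's argument gives an equality of subschemes on $\XX$. It leaves implicit two facts: that the fundamental cycle of the scheme defined by $w^{-1}(I_Z)$ is the closure of the cycle $[Z]$, and that $[q^{-1}(\closure{Z})]=q^*[\closure{Z}]$. It also treats $\OO_{\t{\XX}}$ loosely as a ring, when it is really the Rees algebra $\bigoplus_n\mc J^n$. Your argument proves exactly the cycle identity that feeds into the Segre class computation. It also makes the multiplicity bookkeeping for nonreduced $Z$ explicit.
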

	Let us first finish proving the theorem. By the above lemma, we have
	\begin{align*}
		\eval{\XX} \Big( q_! \big( c_1( \OO_{\mscr F_\XX}(1))^k \proddot \eval{\PP(\mscr F_\XX)}^{-1}(g_* \closure{E}) \Big)
		& = q_* \Big( c_1( \OO_{\mscr F_\XX}(1))^k \cap q^* \closure{Z} \Big)\\
		& = s_{k-(r-1)}(\mscr F_\XX) \cap \closure{Z}.
	\end{align*}
	Here $s_{k-(r-1)}(\mscr F_\XX)$ is the $(k-(r-1))$-Segre class of $\mscr F_\XX$. But $k-(r-1) \leq 0$, therefore from \textcite[proposition~3.1.(a)] {fulton--IntersectionTheory1998} follows
	\[s_{k-(r-1)}(\mscr F_\XX)\cap \closure{Z} = 
	\begin{cases}
		\closure{Z} & \text{if $k=r-1$}\\
		0 & \text{otherwise.}
	\end{cases}\]
	Hence, 
	\begin{equation}
		\Big(\pi_*(\delta_E \wedge f^*c_1(\metr{\OO_F(1)})^{r-1}) \Big)_\XX = 
		\begin{cases}
			i^* \eval{\XX}^{-1} \closure{Z} = (\delta_Y)_\XX & \text{if $k=r-1$} \\ 
			0 & \text{otherwise.}
		\end{cases}
	\end{equation}
	Consequently, (\ref{eq:thm2}) now becomes $\pi_*(\delta_E \wedge f^* c_{r-1}(\metr{Q_F})) = \delta_Y$, and putting this together with~(\ref{eq:thm1}) inside~(\ref{eq:thm0}), we finally obtain
	\[\ddc \Lambda = [c_r(\metr{F})] - \delta_Y.\]
	\qed

	We have now completed the proof of theorem~\ref{thm:bost-g-s}, and all that remains is to prove lemma~\ref{lm:closure}.
	\begin{proof}[Proof of lemma]
		We will actually prove that the image of $\closure{E}$ under the inclusion $\t \XX \into \PP(\mscr F_\XX)$ is exactly $q^{-1}(\closure{Z})$. 
		That is, if we write $\mc I_{\closure{Z}}$ and $\mc I_{\closure{E}}$ for the ideal sheaves of $\closure{Z}$ and $\closure{E}$ respectively, we will show that the surjection $\psi \colon \Sym(\mscr F_\XX^\vee) \onto \OO_{\t \XX}$ described in section~\ref{sec:blow-up_models} induces an isomorphism
		\[\quotient{\Sym(\mscr F_\XX^\vee)}{\mc I_{\closure{Z}}\Sym(\mscr F_\XX^\vee)} 
		\isom 
		\quotient{\OO_{\t \XX}}{\mc I_{\closure{E}}}.\]
		
		We do this locally, so we may assume that $\XX = \Spec A$, that $\mscr F_\XX^\vee$ trivializes as~$\bigoplus_{i=1}^{r} A \proddot x_i$ and that the extension of~$\sigma$ to $\XX$ decomposes as $(f_1, \dots, f_r)$, with~$f_i \in A$. Then $X = \Spec (A \otimes K)$, and the open immersion $X \to \XX$ is induced by the canonical map $w \colon A \to A\otimes K$. 
		Furthermore we have a canonical identification $\Sym(\mscr F_\XX^\vee) \simeq A[x_1, \dots x_r]$.
		The situation is summarized in the following commutative diagram:
			\[\begin{tikzcd}
			{A[x_1, \dots, x_r]} & \OO_{\t \XX} \\
			{(A\otimes_R K)[x_1, \dots, x_r]} & {\OO_{\t\XX} \otimes_R K = \OO_{\t X}} 
			\arrow["\psi", two heads, from=1-1, to=1-2]
			\arrow["w", from=1-1, to=2-1]
			\arrow["w", from=1-2, to=2-2]
			\arrow["\phi", two heads, from=2-1, to=2-2]
		\end{tikzcd}\]
		The vertical arrows are induced by $w \colon A \to A\otimes K$ and are still denoted by $w$.
		
		\medskip 
		
		In this situation, the ideal defining $Z$ is $I_Z = (w(f_1), \dots, w(f_r)) \sbsq A\otimes K$, and because this sequence is regular, the ideal defining $E$ is $I_E = I_Z \proddot \OO_{\t X}$ \parencite[lemma~A.6.1] {fulton--IntersectionTheory1998}. Moreover, again by regularity of the sequence defining $Z$, the kernel of the bottom map $\phi$ is exactly the ideal generated by the $x_i w(f_j) - x_j w(f_i)$, for $1 \leq i,j \leq r$.
		
		\medskip 
		The ideal $I_{\closure{Z}}$ of $\closure{Z}$ in $\XX$ is defined to be $w^{-1}(I_Z) \sbsq A$, and similarly, $I_{\closure{E}} = w^{-1}(I_E) \sbsq \OO_{\t \XX}$. Therefore we want to show that the top map $\psi$ of the previous diagram induces an isomorphism 
		\[\overline{\psi} \colon A/I_{\closure{Z}}[x_1, \dots, x_r] \isom \OO_{\t \XX}/I_{\closure{E}}.\]
		
		First, if $P$ is an element of $I_{\closure{Z}}[{x_1, \dots, x_r}]$, then $w(P) \in I_Z[x_1, \dots, x_r]$, hence~$w(\psi(P)) = \phi(w(P))$ is in $I_Z \proddot \OO_{\t X} = I_E$, that is, $\psi(P) \in I_{\closure{E}}$. This shows that $\overline \psi$ is well-defined.
		
		\medskip
		
		The surjectivity of $\overline \psi$ is ensured because $\psi$ is surjective; so let us show the injectivity of $\overline \psi$. Take $P \in A[x_1, \dots, x_r]$ and suppose $\psi(P) \in I_{\closure{E}}$. Then $w(\psi(P))$ is in $I_E = I_Z \proddot \OO_{\t X}$, therefore by surjectivity of $\phi$ we can choose $Q \in I_Z[x_1, \dots, x_r]$ such that $\phi(Q) = w(\psi(P))$.
		
		But then $w(P)-Q$ is in the kernel of $\phi$, and this kernel being generated by the $(x_i w(f_j)-x_j w(f_i))$, it is included in $I_Z[x_1, \dots, x_r]$. Therefore $w(P) \in I_Z[x_1, \dots, x_r]$, ie $P \in I_{\closure{Z}}[x_1, \dots, x_r]$, thus proving the injectivity.
		
		This concludes the proof of the lemma, and hence completes the proof of theorem~\ref{thm:bost-g-s}.

	\end{proof}

	\section{An explicit analog of \texorpdfstring{$[-\log \norm{\sigma}^2]$}{[-log ||sigma||2]}}\label{sec:-log_sigma}
	
	Now that theorem~\ref{thm:bost-g-s} is proved, let us investigate one difference between its statement and the original complex geometry statement (theorem~\ref{thm:bost-g-s-complex}). In theorem~\ref{thm:bost-g-s} we made use of $s$, the canonical section of $\OO_{\t X}(E)$, where $E$ is the exceptional divisor of the blow-up $\pi \colon \t X \to X$ along~$Z$. Whereas in the complex statement, the corresponding term is replaced by $\pi^* \log \norm{\sigma}^2$.

	This is justified by the following fact : in a complex setup, we have $\pi^* \sigma = s$ \parencite[§1.2.3]
	{bost_gillet_soule--HeightsProjectiveVarietiesPositive1994}. This implies in particular that 
	\[-\log \norm{s}^2 = -\pi^* \log \norm{\sigma}^2\]
	justifying its use in theorem~\ref{thm:bost-g-s}. Note that this also implies the equality of currents on $X$:
	\begin{equation}\label{eq:s_sigma_courants}
		\pi_* [-\log \norm{s}^2] = [-\log \norm{\sigma}^2].
	\end{equation}
	
	In the non-archimedean setup of Bloch-Gillet-Soulé, the analog of $[-\log \norm{s}^2]$ is given by the current $\divnu{s}$, as explained in section~\ref{sec:poincaré-lelong}, but there is no analog yet of~$[-\log \norm{\sigma}^2]$ when $\sigma$ is a section of a vector bundle of rank $r \geq 2$.
	
	In this section we construct a current $\divnu{\sigma} \in \mcurr{0}(X)$ using the constructions of section~\ref{sec:construct}, and we show that $\pi_* \divnu{s} = \divnu{\sigma}$, justifying in view of~(\ref{eq:s_sigma_courants}) the fact that $\divnu{\sigma}$ is the right analog of $[-\log \norm{\sigma}^2]$.
	
	If $\divnu{\sigma}$ were a form, then the current $\Lambda$ appearing in theorem~\ref{thm:bost-g-s} could be rewritten as 
	\[\pi_* [\pi^* \divnu{\sigma} \wedge f^* c_{r-1}(\metr{Q_F})],\]
	in good analogy with theorem~\ref{thm:bost-g-s-complex}. However this will not be the case in general, as was already noted in section~\ref{sec:poincaré-lelong} for sections of line bundles.
	\bigskip

	To construct this current $\divnu{\sigma}$, as in section~\ref{sec:blow-up_models}, let us fix a model $\XX$ of $X$ compatible with the metric, and choose an open covering $\XX = \bigcup_{a \in A} \mc U_a$ which trivializes $\mscr F_\XX$. 
	For every $a$ in $A$, let~$(\eps^{(a)}_k)_{1 \leq k \leq r}$ be a local frame of~$\mscr F_\XX$ over~$\mc U_a$. 
	We denote by~$h^{(ab)} \in \GL_r(\OO_{\mc U_{ab}})$ the induced transition function of~$\mscr F_\XX$ from~$\mc U_a$ to~$\mc U_b$, by this we mean 
	\[(\eps_1^{(b)}, \dots, \eps_r^{(b)}) = (\eps_1^{(a)}, \dots, \eps_r^{(a)}) h^{(ab)} \qquad \text{on $\mc U_a \cap \mc U_b$.}\]

	For all $k \leq r$ and $a\in A$, we write $\sigma |_{\mc U_a} = \sum f_k^{(a)} \eps_k^{(a)}$, with $f_k^{(a)} \in \OO_\XX(\mc U_a)$. Then by definition we have $f^{(a)} = h^{(ab)} f^{(b)}$ on~$U_a \cap U_b$ (where we wrote $f^{(a)}$ for the transpose of $(f_1^{(a)}, \dots, f_r^{(a)})$).
	
	\begin{lm}\label{lm:min_ord_f_ia}
		For any $a,b \in A$, and for all codimension 1 closed irreducible subscheme $V$ of $\XX$ intersecting both $\mc U_a$ and $\mc U_b$, we have 
	\[\min (\ord_V(f_1^{(a)}), \dots, \ord_V(f_r^{(a)})) = \min (\ord_V(f_1^{(b)}), \dots, \ord_V(f_r^{(b)})).\]
	\end{lm}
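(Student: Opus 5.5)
The plan is to work in the local ring $\OO_{\XX,V}$ at the generic point $\eta_V$ of $V$. Since $V$ meets both $\mc U_a$ and $\mc U_b$ and is irreducible, $\eta_V$ lies in $\mc U_a \cap \mc U_b$. Because $\XX$ is regular, $\OO_{\XX,V}$ is a discrete valuation ring, and $\ord_V$ is its normalized valuation. All the functions $f_k^{(a)}$, $f_k^{(b)}$, and the entries of $h^{(ab)}$ and of its inverse, restrict to elements of $\OO_{\XX,V}$. Here we use the remark after proposition~\ref{prop:poincare_lelong}: $\sigma$ has been scaled so that it extends to a global section, so the $f_k^{(a)}$ are regular rather than meromorphic. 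Even if they were meromorphic, the argument below would go through in the fraction field with $\ord_V$ extended as a valuation.

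The key step is an elementary valuation inequality. From $f^{(a)} = h^{(ab)} f^{(b)}$ on $\mc U_a\cap\mc U_b$, each $f_k^{(a)} = \sum_l h^{(ab)}_{kl} f_l^{(b)}$ in $\OO_{\XX,V}$. Since $\ord_V(h^{(ab)}_{kl}) \geq 0$ and $\ord_V$ satisfies the ultrametric inequality, we get
\[\ord_V(f_k^{(a)}) \geq \min_l \bigl(\ord_V(h^{(ab)}_{kl}) + \ord_V(f_l^{(b)})\bigr) \geq \min_l \ord_V(f_l^{(b)}).\]
Taking the minimum over $k$ gives $\min_k \ord_V(f_k^{(a)}) \geq \min_l \ord_V(f_l^{(b)})$. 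Since $h^{(ab)} \in \GL_r(\OO_{\mc U_{ab}})$, its inverse $h^{(ba)} = (h^{(ab)})^{-1}$ also has entries regular on $\mc U_{ab}$, and $f^{(b)} = h^{(ba)} f^{(a)}$. The same argument then gives the reverse inequality, which proves the equality.

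The one point needing care is the convention $\ord_V(0) = +\infty$. If some $f_k^{(a)}$ vanish identically near $V$, the inequalities still hold in $\mb Z \cup \{+\infty\}$. Moreover, not all of them vanish, since $\sigma$ is a regular section on $X$ and hence nonzero at the generic point of the irreducible component of $\XX$ containing $V$, so the minimum is finite. I do not expect a genuine obstacle; the only substantive input is that $\OO_{\XX,V}$ is a DVR, which follows from the regularity of the model $\XX$.
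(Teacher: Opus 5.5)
Your proof is correct and is essentially the paper's argument: the ultrametric inequality for $\ord_V$ applied to $f^{(a)} = h^{(ab)} f^{(b)}$, using that the entries of $h^{(ab)}$ have nonnegative order, gives $\min_k \ord_V(f_k^{(a)}) \geq \min_k \ord_V(f_k^{(b)})$, and the reverse inequality follows by symmetry. Your version also spells out what the paper leaves implicit: that $\eta_V \in \mc U_a \cap \mc U_b$, that $\OO_{\XX,V}$ is a DVR, and that the symmetric step uses $(h^{(ab)})^{-1}$ having regular entries.
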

	
	\begin{proof}
		For all $j$, we have
	\begin{gather*}
		\begin{align*}
			\ord_V(f_j^{(a)}) & = \ord_V\left(\sum_{k=1}^r h^{(ab)}_{jk} f_k^{(b)}\right) 
		 \geq  \min_k \left( \ord_V(h^{(ab)}_{jk} f_k^{(b)})\right)  \geq \min_k \left(\ord_V(f_k^{(b)}) \right)
		\end{align*}
		\shortintertext{therefore}
		\min_k \left(\ord_V(f_k^{(a)}) \right)  \geq \min_k \left(\ord_V(f_k^{(b)}) \right)
	\end{gather*}
	and the other inequality follows from symmetry.
	\end{proof}
	
	The same holds virtually without any change for any model $\rho \colon \XX' \to \XX$ above $\XX$ : we do the same computations, replacing the $f_i^{(a)}$ and the $h^{(ab)}$ by their respective pullback by $\rho$. 
	
	\medskip
	Therefore, the previous lemma show that the collection 
		\[\inf\left( \divnu{f_1^{(a)}}_{\rho^{-1}(\mc U_a)}, \dots, \divnu{f_r^{(a)}}_{\rho^{-1}(\mc U_a)} \right) \in Z_n(\rho^{-1}(\mc U_a) \cap \XX'_0),\]
	satisfies condition (\ref{eq:cycle_compat}) from section~\ref{sec:gluing}, and proposition~\ref{prop:min_courants} shows that it is locally compatible with pushforward, in the sense of definition~\ref{def:loc_compat_pushfw}. 
	
	\begin{defi}\label{def:div_nu_sigma}
	We denote by $\divnu{\sigma}$ the current in $\mcurr{0}(X)$ obtained by applying the gluing construction of proposition~\ref{prop:courant_local} to the above collection of cycles.
	\end{defi}

	\begin{rk}
		This current is indeed independent of the choices made: if $\ZZZ = \bigcup_{i \in I} \mc V_i$ is another model with open cover trivializing $\mscr F_\ZZZ$, then for every model $\XX'$ above both $\XX$ and $\ZZZ$, the vector bundle $\mscr F_{\XX'}$ is trivialized in two different ways by pulling back the trivializations from $\XX$ and $\ZZZ$ respectively. Thus there are transition functions to go from one to the other, and the same computation as before, together with the remark after proposition~\ref{prop:courant_local}, show that the two induced currents coincide.
	\end{rk}
	\begin{rk}
		If $F$ is actually a line bundle, then the current $\divnu{\sigma}$ defined just above coincide with the one previously defined in~\ref{def:divnu}.
	\end{rk}
	\begin{prop}\label{prop:divnusigma_bien_def}
		As above, denote by $E$ the exceptional divisor of $\t X$ and $s$ the canonical section of $\OO_{\t X}(E)$. We have the following equality of currents in~$\mcurr{0}(X)$:
		\[\pi_* \divnu{s} = \divnu{\sigma}.\]
	\end{prop}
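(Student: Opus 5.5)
Both sides are currents in $\mcurr{0}(X)$, so it suffices to compare their components on a cofinal family of models of $X$, and these components are honest $n$-cycles on special fibers. I will use the models built in section~\ref{sec:blow-up_models}: start from a model $\XX$ compatible with the metric, with a trivializing cover $\XX=\bigcup_a \mc U_a$ and local coordinates $f^{(a)}_k$ of $\t\sigma$ (after rescaling by a power of the uniformizer). For every model $\rho\colon\XX'\to\XX$ above $\XX$ the same data pull back, and I take $\t\XX'=\Bl_{\mc J'}\XX'$ as in definition~\ref{def:model_blowup}, with blow-up map $u'$. Then $(\pi_*\divnu{s})_{\XX'} = u'_{0*}\ddiv[\nu]{s}{\t\XX'}$, where $\OO_{\t X}(E)$ carries the metric pulled back from $\OO_F(-1)$. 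I will check that the model $g^*\OO_{\mscr F_{\XX'}}(-1)=\mc J'\OO_{\t\XX'}$-dual, i.e.\ $\OO_{\t\XX'}(D)$ with $D$ the exceptional Cartier divisor of the blow-up of $\mc J'$, represents this metric. This is the step where I must ensure that the image of $g^*q^*\mscr F^\vee\to \OO_{\t\XX'}$ is already invertible on $\t\XX'$, so no further flattening is needed; it holds because that image is exactly $\mc J'\OO_{\t\XX'}$.

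Next I compute $\ddiv{s}{\t\XX'}$. The canonical section $s$ extends to the canonical section of $\OO(D)$, whose divisor is $D$. Hence $\ddiv[\nu]{s}{\t\XX'}$ is the vertical part of $D$, since the horizontal part is $\closure{E}$. Locally on $u'^{-1}(\mc U_a)$ the Cartier divisor $D$ is defined by the invertible ideal $(f^{(a)}_1,\dots,f^{(a)}_r)\OO_{\t\XX'}$. Therefore for every prime divisor $W$ of $\t\XX'$ meeting $u'^{-1}(\mc U_a)$ we get $\ord_W D=\min_k \ord_W(u'^*f^{(a)}_k)$, because an invertible ideal generated by the $f_k$ is generated at the generic point of $W$ by one of them, say $f_{k_0}$, and the others are multiples of it. So on $u'^{-1}(\mc U_a)$ we have $\ddiv[\nu]{s}{\t\XX'}=\inf_k\bigl(\divnu{u'^*f^{(a)}_k}_{\t\XX'}\bigr)$, where each $\divnu{\cdot}$ here is the vertical part of the principal divisor.

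Finally I push forward. Proposition~\ref{prop:min_courants} is stated globally, but its proof is local on the target, so it applies to the restriction $u'^{-1}(\mc U_a)\to\rho^{-1}(\mc U_a)$. Since $u'$ is proper and birational between regular models, it gives $u'_{0*}\inf_k(\cdot)=\inf_k u'_{0*}\divnu{u'^*f_k^{(a)}}$. The pushforward of the vertical part of $\div(u'^*f)$ is the vertical part of $\div f$, by the lemma preceding proposition~\ref{prop:min_courants} and the fact that $u'_*\div(u'^*f)=\div f$. Hence on $\rho^{-1}(\mc U_a)\cap\XX'_0$ the component $(\pi_*\divnu{s})_{\XX'}$ agrees with the local cycle defining $\divnu{\sigma}$ in definition~\ref{def:div_nu_sigma}. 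Gluing over $a$ then gives the equality on $\XX'$, and cofinality of such $\XX'$ concludes. I expect the main obstacle to be the first step, namely correctly identifying the metric model of $\OO_{\t X}(E)$ on $\t\XX'$ and the extension of $s$ as the exceptional divisor $D$.
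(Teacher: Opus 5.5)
Your proposal is correct and follows essentially the same route as the paper. Both arguments identify the extension of $s$ on the blow-up model $\Bl_{\mc J}$ with the exceptional Cartier divisor, show locally that its vertical part is $\inf_k \divnu{f_k^{(a)}}$, push forward using the local form of proposition~\ref{prop:min_courants}, and glue over the cover. The only differences are cosmetic: you get $\ord_W D=\min_k \ord_W(f_k^{(a)})$ because the invertible ideal $\mc J\OO$ is locally generated by one of the $f_k^{(a)}$, whereas the paper reads it off the chart equations $f_j^{(a)}x_k=f_k^{(a)}x_j$ on $\{x_k\neq 0\}$, and you make explicit the metric compatibility of $\t\XX'$ and the locality of proposition~\ref{prop:min_courants}, which the paper uses tacitly.
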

	
	\begin{proof}	
		We will keep the notation used in the discussion before definition~\ref{def:div_nu_sigma}: we have a fixed model $\XX$, compatible with the metric, an open covering $(\mc U_a)$ of~$\XX$ on which $\mscr F_\XX$ trivializes, and locally we write $\sigma|_{\mc U_a} = \sum f_i^{(a)} \eps_i^{(a)}$.
		
		It is enough to show the equality of currents on models $\XX'$ above $\XX$, as the set of such models is cofinal. Replacing the $f_i^{(a)}$ by their pullback and the $\mc U_a$ by their inverse image, we can assume that $\XX' = \XX$, that is we only need to show 
		\[(\pi_* \divnu{s})_\XX = \ddiv[\nu]{\sigma}{\XX} \qquad \text{in }Z_n(\XX_0).\]
		
		Denote by $\iota_a$ the inclusion $\mc U_a \to \XX$. Since cycles are involved, and since the~$\mc U_a\cap \XX_0$'s cover $\XX_0$, it is enough to show that for all $a \in A$, 
		\[(\iota_a)_0^* (\pi_* \divnu{s})_\XX = (\iota_a)_0^* \ddiv[\nu]{\sigma}{\XX}.\]
		Note that by definition, 
		\[(\iota_a)_0^* \ddiv[\nu]{\sigma}{\XX} = \inf \left( \ddiv[\nu]{f_1^{(a)}}{\mc U_a}, \dots, \ddiv[\nu]{f_r^{(a)}}{\mc U_a}\right).\]
		
		Let $u \colon \t \XX \to \XX$ be the model of $\t X$ defined in~\ref{def:model_blowup}: it is obtained by blowing-up the closed subscheme of $\XX$ locally defined by $\{f_1^{(a)} = \dots = f_r^{(a)} = 0\}$. 
		Write~$\mc V_a = u^{-1}(\mc U_a)$, then we have the following commutative diagram:
		\[\begin{tikzcd}
			\mc V_a = {u^{-1}(\mc U_a)} & {\t\XX} \\
			{\mc U_a} & \XX
			\arrow["{\iota'_a}", from=1-1, to=1-2]
			\arrow["{u_a}"', from=1-1, to=2-1]
			\arrow["u", from=1-2, to=2-2]
			\arrow["{\iota_a}"', from=2-1, to=2-2]
		\end{tikzcd}\]

		As seen in section~\ref{sec:blow-up_models}, there is a canonical closed immersion $g \colon \t\XX \into \PP(\mscr F_\XX)$. Since blowing-up is local, we can look at this immersion locally: the induced map is
		\[\Sym(\mscr F_\XX)|_{\mc U_a}) \simeq \OO_{\mc U_a} [x_1, \dots, x_r] \onto \bigoplus_n (f_1^{(a)}, \dots, f_r^{(a)})^n ;\]
		therefore $g$ identifies $\mc V_a$ with a closed subset of $\mc U_a \times \PP^{r-1}_R$, given by several equations, among which are
		\begin{equation}\label{eq:blow-up}
			f_j^{(a)}x_k = f_k^{(a)}x_j \qquad \text{for } 1 \leq j,k \leq r.
		\end{equation}
		
		On the open set $\mc V_{ak} :=\mc V_a \cap \{x_k \neq 0\}$, the equation of the exceptional divisor (that is, the function representing the section $s$ on this open set) is then~$f_k^{(a)} = 0$. 		
		
		Using equations~(\ref{eq:blow-up}), for all $a\in A$, $k \leq r$ we have 
		\begin{align*}
			&&\ddiv[\nu]{f_j^{(a)}}{\mc V_{ak}} & = \ddiv[\nu]{f_k^{(a)}}{\mc V_{ak}} + \ddiv[\nu]{x_j}{\mc V_{ak}} & \\
			&&&\geq \ddiv[\nu]{f_k^{(a)}}{\mc V_{ak}} & \\
			\text{hence} && \inf_j \left( \ddiv[\nu]{f_j^{(a)}}{\mc V_{ak}}\right) & = \ddiv[\nu]{f_k^{(a)}}{\mc V_{ak}} = \ddiv[\nu]{s}{\mc V_{ak}} &\\
			\text{and thus} && \inf_j \left( \ddiv[\nu]{f_j^{(a)}}{\mc V_a}\right) & = \ddiv[\nu]{s}{\mc V_a}
		\end{align*}
		Applying proposition~\ref{prop:min_courants}, we obtain
		\begin{align*}
			(u_a)_{0*} \ddiv[\nu]{s}{\mc V_a} & = \inf_j \left( \ddiv[\nu]{f_j^{(a)}}{\mc U_a}\right) 
		\shortintertext{but we also have}
			(u_a)_{0*} \ddiv[\nu]{s}{\mc V_a} & = (u_a)_{0*}(\iota'_a)_0^* \ddiv[\nu]{s}{\t \XX} \\ 
			& = (\iota_a)_0^* u_{0*} \ddiv[\nu]{s}{\t \XX}  
		\end{align*}
		where we used \parencite[prop. 1.7]{fulton--IntersectionTheory1998} for the last equality, the map~$u_a$ being proper since it is a blow-up map.
		
		We finally obtain
		 \[(\iota_a)_0^* (\pi_* \divnu{s})_\XX = (\iota_a)_0^* u_{0*} \ddiv[\nu]{s}{\t \XX} = \inf_j \left( \ddiv[\nu]{f_j^{(a)}}{\mc U_a}\right).\] 
		This is precisely what we wanted to show, and this concludes the proof.
	\end{proof}

\section{Levine formula and other corollaries}
	
	Let us now focus on a particular case of theorem~\ref{thm:bost-g-s}, where the rank $r$ vector bundle $F$ is of the form $L \oplus \dots \oplus L$, with $L$ a line bundle on $X$. We also assume that the metric $\mscr F$ splits accordingly: there exists a metric $\mscr L$ of $L$ such that $\mscr F_\XX = \mscr L_\XX \oplus \dots \oplus \mscr L_\XX$ on compatible models.
	
	By applying theorem~\ref{thm:bost-g-s} in this case, we will recover an analog of the generalized Levine formula (theorem \ref{thm:levine_gh_complex}), which in turns implies both the usual Levine formula (when $L = \OO(1)$ on $X = \PP^n$) and Martinelli formula (when $L = \OO_X$).
	
	\medskip
	
	In this situation, the global section $\sigma$ can be written as $\sigma = (s_1, \dots, s_r)$, with the $s_i$ being global sections of $L$ and forming a regular sequence, and the closed subset $Z$ is equal to ${\{ s_1 = \dots = s_r = 0 \}}$.
	
	Let us still write $\pi \colon \t X \to X$ for the blow-up of $X$ along $Z$, $E$ the exceptional divisor and $f$ the canonical closed immersion $\t X \into \PP(F)$. In this situation, the projective space $\PP(F)$ can be described as follows:
	\begin{align*}
		\PP(F) & = \Proj \big(\Sym(\bigoplus_{k=0}^{r-1} L^\vee)\big) = \Proj \big( \Sym (\bigoplus_{k=0}^{r-1} \OO_X)\proddot L^\vee\big)  \\ 
		& \simeq \Proj \big(\Sym (\bigoplus_{k=0}^{r-1} \OO_X)\big) = X \times_R \PP^{r-1}_R
	\end{align*}
	where the isomorphism comes from \textcite[lemma~II.7.9] {hartshorne--AlgebraicGeometry1977}. Note that modulo this isomorphism, we have 
	\begin{equation}\label{eq:o_F(1)}
		\metr{\OO_F(1)} = p^*\metr{L^\vee} \otimes q^* \metr{\OO_{\PP_R^{r-1}}(1)}
	\end{equation}
	where $p$ and $q$ are the two projections from $X \times \PP^{r-1}_R$. The notations are summarized in this diagram:
	\[\begin{tikzcd}
		{\t X} & {X \times \PP^{r-1}_K} \\
		X & {\PP^{r-1}_K}
		\arrow["f", hook, from=1-1, to=1-2]
		\arrow["\pi"', from=1-1, to=2-1]
		\arrow["p", from=1-2, to=2-1]
		\arrow["q"', from=1-2, to=2-2]
	\end{tikzcd}\]

	As pointed out in the introduction, to state an analog of Levine formula we need to construct the differential forms on the blow-up and not on $X$ itself, as we lack a notion of differential forms on proper open subsets. 
	This is however of little consequence, because the form $\om_0$ appearing in the original statement satisfies 
	\[[\om_0] = \pi_* [f^* q^* c_1(\metr{\OO_{\PP_\CC^{r-1}}(1)})].\] 
	
	This allows us to state the following analog of Levine formula in our non-archimedean setup.
	
	\begin{thm}\label{thm:levine_gh}
		In the above situation, let $s$ be the canonical section of $\OO_{\t X}(E)$, and let $\alpha = \pi^* c_1(\metr{L})$ and $\beta = f^*q^* c_1(\metr{\OO_{\PP^{r-1}_K}(1)})$, they are elements of~$\cform{1}(\t X)$.
		
		Let $\Lambda$ be the current in $\mcurr{r-1}(X)$ defined by
		\[\Lambda = \pi_*\left(\divnu{s} \wedge \sum_{k=0}^{r-1} \alpha^{r-1-k}\wedge\beta^k\right).\]
		Then \[\ddc \Lambda + \delta_Z = [c_1(\metr{L})^r].\]
	\end{thm}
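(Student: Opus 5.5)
The plan is to deduce the statement directly from theorem~\ref{thm:bost-g-s} applied to $F = L^{\oplus r}$ with the split metric $\mscr F_\XX = \mscr L_\XX^{\oplus r}$. Two things must be checked. First, the form $f^*c_{r-1}(\metr{Q_F})$ appearing in $\Lambda$ there should coincide with $\sum_{k=0}^{r-1}\alpha^{r-1-k}\wedge\beta^k$. Second, $c_r(\metr{F})$ should equal $c_1(\metr{L})^r$. Once both identities hold in $\cform{}(\t X)$ and $\cform{}(X)$ respectively, the two currents $\Lambda$ agree. The conclusion $\ddc\Lambda+\delta_Z=[c_1(\metr{L})^r]$ is then exactly theorem~\ref{thm:bost-g-s}.

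For $c_r(\metr{F})$, I would note that on a compatible model $\mscr F_\XX$ is a direct sum of $r$ copies of $\mscr L_\XX$. The Whitney formula \parencite[theorem~3.2]{fulton--IntersectionTheory1998} gives $c(\mscr F_\XX)=(1+c_1(\mscr L_\XX))^r$, so $c_r(\mscr F_\XX)=c_1(\mscr L_\XX)^r$. Pulling back by $i^*$ (using (A\textsubscript{13}) as in lemma~\ref{lm:compat_inclusion_wedge}) yields $c_r(\metr{F})=c_1(\metr{L})^r$, and similarly $c_k(\metr F)=\binom rk c_1(\metr L)^k$.

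For $Q_F$, I start from the decomposition~(\ref{eq:c_l(Q)}) with $l=r-1$, writing $\gamma=f^*c_1(\metr{\OO_F(1)})$:
\[f^*c_{r-1}(\metr{Q_F})=\sum_{k=0}^{r-1}\binom{r}{r-1-k}\alpha^{r-1-k}\wedge\gamma^k.\]
By~(\ref{eq:o_F(1)}), $\gamma=\beta-\alpha$. Here I must justify that the induced metric on $\OO_F(-1)$ coincides, on suitable models (for instance $\PP(\mscr F_\XX)\simeq\XX\times\PP^{r-1}_R$), with the tensor product metric. I expect this because the tautological subbundle of $\PP(\mscr L_\XX^{\oplus r})$ is already locally free there, so no flattening is needed and the Hartshorne isomorphism extends over $R$. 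One also needs $c_1$ to be additive on tensor products of metrized line bundles, which holds on models by the same formula in $\CH^\sbullet(\XX_0)$. Substituting gives the polynomial identity
\[\sum_{k=0}^{r-1}\binom{r}{k+1}a^{r-1-k}(b-a)^k=\frac{(a+(b-a))^r-a^r}{b-a}=\frac{b^r-a^r}{b-a}=\sum_{k=0}^{r-1}a^{r-1-k}b^k.\]
This holds as an identity in $\mathbb Z[a,b]$, hence in the commutative subring generated by $\alpha,\beta$. Commutativity is fine since both are pulled back from $\CH^\sbullet$ of regular models via $i^*$, which are commutative.

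The main obstacle is the metric compatibility step. I need to show that the quotient/induced metrics from definition~\ref{def:induced_metrics} on $\PP(F)$ agree with $p^*\metr{L^\vee}\otimes q^*\metr{\OO(1)}$, i.e.\ that~(\ref{eq:o_F(1)}) holds as an equality of metrized bundles. My approach is to exhibit the model $\XX\times_R\PP^{r-1}_R$, where $\OO_{\mscr F_\XX}(-1)\subset q^*\mscr F_\XX$ is a subbundle with locally free quotient. Then the kernel and image used in definition~\ref{def:induced_metrics} are these bundles themselves, and they are identified with $p^*\mscr L_\XX\otimes q^*\OO(-1)$. Once this is in place, the remainder is the formal algebra above.
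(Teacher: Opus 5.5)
Your proposal is correct and follows essentially the same route as the paper. Like the paper, you reduce to theorem~\ref{thm:bost-g-s} via the identities $c_r(\metr{F}) = c_1(\metr{L})^r$ and $f^*c_{r-1}(\metr{Q_F}) = \sum_k \alpha^{r-1-k}\wedge\beta^k$, the latter obtained from~(\ref{eq:c_l(Q)}) and~(\ref{eq:o_F(1)}). The differences are minor: your identity $\sum_{k}\binom{r}{k+1}a^{r-1-k}(b-a)^k = (b^r-a^r)/(b-a)$ is a shorter replacement for the paper's binomial expansion and generating-function computation, and your check that the induced metric on $\OO_F(-1)$ is represented on $\PP(\mscr F_\XX) \simeq \XX\times_R\PP^{r-1}_R$ by the tensor-product bundle makes explicit what the paper simply asserts in~(\ref{eq:o_F(1)}).
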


	\begin{proof}
		Using theorem~\ref{thm:bost-g-s}, it is enough to show the two following equalities:
		\begin{align*}
			c_r(\metr{F}) & = c_1(\metr{L})^r ~;\\
			f^* c_{r-1}(\metr{Q_F}) & = \sum_{k=0}^{r-1} \alpha^{r-1-k}\wedge\beta^k.
		\end{align*}
		
		First note that by lemma~\ref{lm:exact sq} and a straightforward induction we obtain $c_\sbullet(\metr{F}) = c_\sbullet(\metr{L})^r$, that is, for all $k \geq 0$, 
		\begin{equation}\label{eq:c_K(F)}
			c_k(\metr{F}) = \binom{r}{k} c_1(\metr{L}) ^k,
		\end{equation}
		which takes care of the first term to be computed.
		
		Then using equation~(\ref{eq:c_l(Q)}), we compute:
		\begin{align*}
			f^*c_{r-1}(\metr{Q_F}) 
			& = \sum_{j=0}^{r-1} f^*p^*c_{r-1-j}(\metr{F}) \wedge f^* c_1(\metr{\OO_F(1)})^j \\
			& = \sum_{j=0}^{r-1} \binom{r}{r-1-j} \alpha^{r-1-j} \wedge (\beta - \alpha)^j \qquad \qquad \  \text{ using (\ref{eq:c_K(F)}) and (\ref{eq:o_F(1)})} \\
			& = \sum_{j=0}^{r-1} \binom{r}{r-1-j} \alpha^{r-1-j} \wedge \sum_{k=0}^j \binom{j}{k} \beta^k \alpha^{j-k} (-1)^{j-k} \\
			& = \sum_{k=0}^{r-1} \alpha^{r-1-k}\beta^k \sum_{j=k}^{r-1} \binom{r}{r-1-j}\binom{j}{k}(-1)^{j-k}
		\end{align*}
		
		It remains to show that for all $0 \leq k \leq r-1$, the inner sum is equal to 1.
		Notice that for $x$ a formal variable,
		\[x^{r-1}\sum_{j=k}^{r-1} \binom{r}{r-1-j}\binom{j}{k}(-1)^{j-k}
 		 = \sum_{\substack{i+j = r-1 \\ j \geq k}} \binom{r}{i}x^i \binom{j}{k}x^j(-1)^{j-k}\]
		 is the $x^{r-1}$ term in the formal expression
		 \begin{align*}
		 	(1+x)^r\left(\sum_{j\geq k} \binom{j}{k}x^j(-1)^{j-k}\right)  
		 	& = (1+x)^r\frac{x^k}{k!}\sum_{j\geq k}j(j-1)\dots(j-k+1)(-x)^{j-k} \\ 
		 	& = (1+x)^r \frac{x^k}{k!} \frac{\dd^k}{\dd x^k} \sum_{j \geq 0} (-x)^j \\ 
		 	& = (1+x)^r \frac{x^k}{k!} \frac{k!}{(1+x)^{k+1}} \\ 
		 	& = x^k(1+x)^{r-1-k}.
		 \end{align*}
		 Therefore the wanted sum is the $x^{r-1}$ term of $x^k(1+x)^{r-1-k}$, that is, 1, as was to be shown. This concludes the proof.
	\end{proof}
	
	\begin{rk}
		In this situation, we have an easy description of the current $\divnu{\sigma}$: it is equal to $\inf \left( \divnu{s_0}, \dots, \divnu{s_{r-1}} \right)$, as can be seen by unfolding the respective definitions of $\divnu{\sigma}$ and $\divnu{s_i}$.
	\end{rk}
		
		We conclude by stating the two following corollaries.
	\begin{cor}\hfill
		\begin{itemize}
			\item If in the statement of theorem~\ref{thm:levine_gh} we take $X = \PP^n_K$, with homogeneous coordinates $(x_k)$, $L = \mc \OO_X(1)$ with standard metric and $s_k = x_k$, then we recover an analog of the usual Levine formula.
			
			\item If in the statement of theorem~\ref{thm:levine_gh} we take $L$ to be~$\mc \OO_X$ with trivial metric, then $\alpha = c_1(\metr{\mc \OO_X}) = 0$, and the statement becomes the following analog of Martinelli formula:
			\begin{align*}
				\ddc \pi_* \left( \divnu{s} \wedge \beta^{r-1}\right) + \delta_Z = 0.
			\end{align*}
		\end{itemize}
	\end{cor}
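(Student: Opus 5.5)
The plan is to obtain both items by specializing theorem~\ref{thm:levine_gh}. The only real work is to check the hypotheses and to identify the forms $\alpha$ and $\beta$ in each case.

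For the first item, take $X = \PP^n_K$ and $L = \OO_X(1)$, with the standard metric given on the model $\PP^n_R$ by $\OO_{\PP^n_R}(1)$. The split metric on $F = L^{\oplus r}$ is then $\OO(1)^{\oplus r}$ on compatible models. Here $r \leq n+1$ and $\sigma = (x_0, \dots, x_{r-1})$. I will first verify that the coordinates $x_0, \dots, x_{r-1}$ form a regular section of $F$. Locally on the chart $\{x_m \neq 0\}$, the components are dehomogenized coordinates, possibly together with a unit. These form part of a regular system of parameters, so the zero locus $Z$ is the linear subspace $\{x_0 = \dots = x_{r-1} = 0\}$ of codimension $r$. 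In the case $r = n+1$ the zero locus is empty and the statement degenerates to $[c_1(\metr L)^{n+1}] = \ddc \Lambda$.

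Then theorem~\ref{thm:levine_gh} applies directly. Writing $\omega = c_1(\metr{\OO(1)})$, we get $\alpha = \pi^*\omega$ and $\beta = f^*q^*c_1(\metr{\OO_{\PP^{r-1}}(1)})$. The map $q \circ f$ is the linear projection $\t X \to \PP^{r-1}_K$ away from $Z$. So $\beta$ is the pullback of the Fubini--Study analog along this projection, exactly as $\om_0$ is in the classical Levine formula. The resulting identity
\[\ddc \pi_*\Bigl(\divnu{s}\wedge \sum_{k=0}^{r-1}\alpha^{r-1-k}\wedge\beta^k\Bigr) + \delta_Z = [\omega^r]\]
is the stated analog, so nothing further needs to be proved.

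For the second item, take $L = \OO_X$ with the trivial metric, given by $\OO_\XX$ on every model. Then $c_1(\metr{\OO_X})$ is represented on each model by $i^*(c_1(\OO_\XX)\cap \_) = 0$. Hence $\alpha = \pi^* 0 = 0$, and therefore $c_1(\metr L)^r = 0$. In the sum $\sum_{k}\alpha^{r-1-k}\wedge\beta^k$ only the term $k = r-1$ survives, using the convention $\alpha^0 = 1$ and the fact that $r-1-k > 0$ kills every other term. Theorem~\ref{thm:levine_gh} then gives $\ddc\pi_*(\divnu{s}\wedge\beta^{r-1}) + \delta_Z = 0$.

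The only step needing any care is the regularity check in the first item. It is a standard local computation, so I expect no genuine obstacle: the corollary is a direct substitution.
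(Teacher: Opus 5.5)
Your proposal is correct and matches the paper. The paper states the corollary without a separate proof, as a direct specialization of theorem~\ref{thm:levine_gh}; the only computation is that the trivial metric gives $\alpha = 0$ and $[c_1(\metr{\OO_X})^r] = 0$, so only the $\beta^{r-1}$ term survives. Your check that $x_0, \dots, x_{r-1}$ form a regular sequence on $\PP^n_K$, and your reading of $\beta$ as a pullback along the linear projection (mirroring $\om_0$), just make explicit what the paper leaves implicit.
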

	\printbibliography
	\address
	\end{document}